\documentclass[11pt,letterpaper]{amsart}
\usepackage[english]{babel}
\usepackage[colorlinks]{hyperref}
\usepackage[dvipsnames]{xcolor}
\usepackage{xstring}
\usepackage{seqsplit}

\usepackage[shortlabels]{enumitem}
\setenumerate{label=(\roman*)}

\newcommand\myshade{100}
\hypersetup{
  linkcolor  = RoyalBlue!\myshade!black,
  citecolor  = ForestGreen!\myshade!black,
  urlcolor   = RedOrange!\myshade!black,
  colorlinks = true,
}

\DeclareRobustCommand{\faGithub}{{\usefont{U}{fontawesomebrands0}{regular}{n}\char167}}
\DeclareRobustCommand{\faFolderOpen}{{\usefont{U}{fontawesomefree1}{solid}{n}\char105}}
\DeclareRobustCommand{\faFileCode}{{\usefont{U}{fontawesomefree1}{solid}{n}\char68}}
\colorlet{githubrefcolor}{RedOrange!\myshade!black}
\makeatletter
\newcommand{\github@breakable@text}[1]{%
  \begingroup
  \edef\github@breakable@result{#1}%
  \texttt{\expandafter\seqsplit\expandafter{\github@breakable@result}}%
  \endgroup
}
\newcommand{\github@breakable@literal}[1]{%
  \begingroup
  \edef\github@text{\detokenize{#1}}%
  \github@breakable@text{\github@text}%
  \endgroup
}
\newcommand{\github@ref}[3]{%
  \href{#3}{\textcolor{githubrefcolor}{#1\,#2}}%
}
\newcommand{\github@ref@literal}[3]{%
  \href{#3}{\textcolor{githubrefcolor}{#1\,\github@breakable@literal{#2}}}%
}
\newcommand{\github@faicon}[1]{{\scriptsize #1}}
\DeclareRobustCommand{\leanicon}{%
  \begingroup
  \scriptsize
  \leavevmode
  \ooalign{%
    \hfil$\bm\forall$\hfil\cr
    \hfil\kern0.018em$\bm\forall$\hfil\cr
    \hfil\kern0.036em$\bm\forall$\hfil\cr
    \hfil\kern-0.018em$\bm\forall$\hfil\cr
    \hfil\kern-0.036em$\bm\forall$\hfil\cr
  }%
  \endgroup
}
\newcommand{\github@strip@suffix}[1]{%
  \IfSubStr{#1}{?}{\StrBefore{#1}{?}[#1]}{}%
  \IfSubStr{#1}{\#}{\StrBefore{#1}{\#}[#1]}{}%
}
\newcommand{\github@repo@text}[1]{%
  \begingroup
  \StrBehind{#1}{github.com/}[\github@path]%
  \github@strip@suffix{\github@path}%
  \IfSubStr{\github@path}{/}{%
    \StrBehind{\github@path}{/}[\github@afterowner]%
    \IfSubStr{\github@afterowner}{/}%
      {\StrBefore{\github@afterowner}{/}[\github@text]}%
      {\let\github@text\github@afterowner}%
  }{%
    \let\github@text\github@path
  }%
  \github@breakable@text{\github@text}%
  \endgroup
}
\newcommand{\github@path@text}[2]{%
  \begingroup
  \IfSubStr{#1}{/#2/}{%
    \StrBehind{#1}{/#2/}[\github@path]%
    \github@strip@suffix{\github@path}%
    \IfSubStr{\github@path}{/}%
      {\StrBehind{\github@path}{/}[\github@text]\github@breakable@text{\github@text}}%
      {\github@repo@text{#1}}%
  }{%
    \def\github@text{#1}%
    \github@breakable@text{\github@text}%
  }%
  \endgroup
}
\newcommand{\github@line@text}[1]{%
  \begingroup
  \StrBehind{#1}{/blob/}[\github@path]%
  \github@strip@suffix{\github@path}%
  \IfSubStr{\github@path}{/}%
    {\StrBehind{\github@path}{/}[\github@file]}%
    {\StrBehind{#1}{github.com/}[\github@file]\github@strip@suffix{\github@file}}%
  \StrBehind{#1}{\#}[\github@line]%
  \IfSubStr{\github@line}{?}{\StrBefore{\github@line}{?}[\github@line]}{}%
  \github@breakable@text{\github@file}\space\github@breakable@text{\github@line}%
  \endgroup
}
\newcommand{\github@repo@default}[1]{%
  \github@ref{\github@faicon{\faGithub}}{\github@repo@text{#1}}{#1}%
}
\newcommand{\github@folder@default}[1]{%
  \github@ref{\github@faicon{\faFolderOpen}}{\github@path@text{#1}{tree}}{#1}%
}
\newcommand{\github@file@default}[1]{%
  \github@ref{\github@faicon{\faFileCode}}{\github@path@text{#1}{blob}}{#1}%
}
\newcommand{\github@line@default}[1]{%
  \github@ref{\leanicon}{\github@line@text{#1}}{#1}%
}
\newcommand{\github@tree@default}[1]{%
  \begingroup
  \StrBehind{#1}{/tree/}[\github@treepath]%
  \github@strip@suffix{\github@treepath}%
  \IfSubStr{\github@treepath}{/}%
    {\endgroup\github@folder@default{#1}}%
    {\endgroup\github@repo@default{#1}}%
}
\newcommand{\github@auto@default}[1]{%
  \IfSubStr{#1}{/blob/}{%
    \IfSubStr{#1}{\#L}%
      {\github@line@default{#1}}%
      {\github@file@default{#1}}%
  }{%
    \IfSubStr{#1}{/tree/}%
      {\github@tree@default{#1}}%
      {\github@repo@default{#1}}%
  }%
}
\newcommand{\github@auto@literal}[2]{%
  \IfSubStr{#2}{/blob/}{%
    \IfSubStr{#2}{\#L}%
      {\github@ref@literal{\leanicon}{#1}{#2}}%
      {\github@ref@literal{\github@faicon{\faFileCode}}{#1}{#2}}%
  }{%
    \IfSubStr{#2}{/tree/}%
      {\github@ref@literal{\github@faicon{\faFolderOpen}}{#1}{#2}}%
      {\github@ref@literal{\github@faicon{\faGithub}}{#1}{#2}}%
  }%
}
\NewDocumentCommand{\gh}{om}{%
  \IfNoValueTF{#1}%
    {\github@auto@default{#2}}%
    {\github@auto@literal{#1}{#2}}%
}
\makeatother

\usepackage{amsmath}
\usepackage{amssymb}
\usepackage{amsfonts}
\usepackage{amsthm}
\usepackage{bm}
\usepackage{bbold}

\usepackage[capitalise]{cleveref}
\crefformat{equation}{(#2#1#3)}
\crefname{subsection}{Section}{Sections}

\theoremstyle{plain}
\newtheorem{thm}{Theorem}[section]
\newtheorem{lem}[thm]{Lemma}
\newtheorem{prop}[thm]{Proposition}
\newtheorem{cor}[thm]{Corollary}

\theoremstyle{definition}
\newtheorem{defn}[thm]{Definition}
\newtheorem{example}[thm]{Example}

\theoremstyle{remark}
\newtheorem*{rem}{Remark}

\ExplSyntaxOn
\NewDocumentCommand{\DeclareLeanTheoremEnvironment}{m}
  {
    \cs_new_eq:cc { lean_old_#1: } { #1 }
    \cs_new_eq:cc { lean_old_end_#1: } { end#1 }
    \RenewDocumentEnvironment{#1}{ o d<> }
      {
        \IfNoValueTF{##1}
          { \use:c { lean_old_#1: } }
          { \use:c { lean_old_#1: } [##1] }
        \IfNoValueF{##2}
          { \nobreak\textup{##2}\space }
      }
      { \use:c { lean_old_end_#1: } }
  }
\ExplSyntaxOff

\DeclareLeanTheoremEnvironment{thm}
\DeclareLeanTheoremEnvironment{lem}
\DeclareLeanTheoremEnvironment{prop}
\DeclareLeanTheoremEnvironment{cor}
\DeclareLeanTheoremEnvironment{defn}

\makeatletter
\def\@fnsymbol#1{\ensuremath{\ifcase#1\or \dagger\or \ddagger\or
           \dagger\dagger
           \or \ddagger\ddagger \else\@ctrerr\fi}}
\makeatother

\newcommand\N{\ensuremath{\mathbb{N}}}
\newcommand\R{\ensuremath{\mathbb{R}}}
\renewcommand\O{\ensuremath{\varnothing}}

\newcommand{\el}[1]{\ensuremath{\ell_{#1}}}
\newcommand{\Lr}[1]{\ensuremath{\mathcal{L}_r(#1)}}

\DeclareMathOperator{\supp}{supp}
\DeclareMathOperator{\barespn}{span}

\usepackage[backend=biber,maxnames=999,giveninits=true,style=numeric,sortcites=true,datamodel=mrnumber,isbn=false,url=false,doi=false]{biblatex}
\usepackage{csquotes}

\DeclareFieldFormat{title}{\myhref{\mkbibemph{#1}}}
\DeclareFieldFormat
  [article,inbook,incollection,inproceedings,patent,thesis,unpublished]
  {title}{\myhref{\mkbibquote{#1\isdot}}}

\newcommand{\myhref}[1]{%
 \ifboolexpr{%
   test {\ifhyperref}
   and
   not test {\iftoggle{bbx:url}}
   and
   not test {\iftoggle{bbx:doi}}
  }
  {\iffieldundef{doi}
     {\iffieldundef{url}
        {#1}
        {\href{\strfield{url}}{#1}}}
     {\href{https://doi.org/\strfield{doi}}{#1}}}
  {#1}%
}

\DeclareFieldFormat{mrnumber}{%
  MR\addcolon\space
  \ifhyperref
    {\href{http://www.ams.org/mathscinet-getitem?mr=#1}{\nolinkurl{#1}}}
    {\nolinkurl{#1}}}

\renewbibmacro*{doi+eprint+url}{%
  \printfield{mrnumber}%
}

\DeclareFieldFormat{eid}{\printtext{article no.} #1}

\newcommand{\one}{\mathbb{1}}
\DeclareMathOperator{\id}{id}

\title[Wickstead's conjecture on positive projections]{Wickstead's
conjecture on positive projections and non-representable Banach
lattice algebras}
\author{David Muñoz-Lahoz}
\address{Instituto de Ciencias Matemáticas, Universidad Autónoma de
Madrid, C/ Nicolás Cabrera 13–15, 28049 Madrid, Spain}
\email{david.munnozl (at) uam (dot) es}
\thanks{Research supported by an FPI–UAM 2023 contract (funded by
Universidad Autónoma de Madrid) and by grants PID2024-162214NB-I00 and
CEX2023-001347-S (funded by MCIN/AEI/10.13039/501100011033).}
\date{August 10, 2026}
\subjclass[2020]{47B65, 46B42, 46A40, 06F25}
\keywords{positive operator, positive projection, Banach lattice,
Banach lattice algebra}

\begin{document}

\begin{abstract}
    Let $X$ be a Dedekind complete Banach lattice, and let $P\colon
    X\to X$ be a positive projection for which the largest central
    operator below $P$ is $\alpha \id_X$, for some $\alpha \ge 0$.
    Wickstead conjectured that $\alpha $ must either be $0$ or $1/n$,
    for some $n \in \N$, and proved it for finite-dimensional $X$.
    In this paper, we show that the conjecture holds in general. As a
    consequence, we settle the representation problem for Banach
    lattice algebras: we show that there exist Banach lattice
    algebras of dimension $2$ that do not admit a faithful
    representation as regular operators on any Dedekind complete
    Banach lattice. All the main results in this paper have been
    formalized in Lean 4 using the Banach lattice Lean library.
\end{abstract}

\maketitle

\section{Introduction}

In \cite{wickstead2017_two}, A.\ W.\ Wickstead formulated the following question. Let $X$ be
a Dedekind complete Banach lattice, and let $P\colon X\to X$ be a positive
projection for which there exists an $\alpha \ge 0$ satisfying:
\begin{enumerate}
\item $\alpha \id_X\le P$, and
\item whenever $T$ is a positive operator such that both $T\le
    \lambda \id_X$, for some $\lambda \ge 0$, and $T\le P$, then
    also $T\le \alpha \id_X$.
\end{enumerate}
Which values can $\alpha $ take? Wickstead conjectured that either
$\alpha =0$ or $\alpha =1/n$, for some $n \in \N$. The first main goal
of this paper is to prove that this is indeed the case
(\cref{cor:wickstead_question}).

When $X$ is finite-dimensional, conditions (i) and (ii)
imply that the diagonal elements in the matrix representing
$P$ are all equal to $\alpha $. Thus we will refer to these
two conditions by saying that $P$ has constant diagonal $\alpha
$. When $X=C(K)$, for some compact Hausdorff space $K$, or $X=L_1(\mu
)$, for some finite measure $\mu $, that $P$ has constant diagonal
$\alpha $ means that the biggest multiplication operator (by an
element of $C(K)$ or of $L_\infty (\mu )$, respectively) below $P$ is
$\alpha \id_X$.

The motivation behind this question lies in the representation of
Banach lattice algebras. A Banach lattice algebra is a Banach lattice
together with a Banach algebra structure for which the product of
positive elements is positive. One of the most prominent questions
regarding Banach lattice algebras is whether every such object can be
faithfully represented (both as a lattice and as an algebra) in $\Lr
X$, the space of regular operators on some Dedekind complete Banach
lattice $X$ \cite{wickstead2017_questions}. This pursuit of concrete
realization is heavily inspired by foundational representation
theorems in operator algebras, where abstractly defined structures are
successfully shown to be isomorphic to concrete algebras of operators.
A prime example is the Gelfand--Naimark--Segal theorem,
which establishes that any abstract $C^*$-algebra can be faithfully
represented as a closed subalgebra of bounded operators on a Hilbert
space. A positive answer to the representation problem for Banach
lattice algebras would provide a similar structural cornerstone,
allowing abstract lattice algebras to be analyzed through the
concrete, well-understood properties of regular operators on Dedekind
complete Banach lattices.

The second main goal of this paper, and the one that motivates the
first, is to settle this representation problem in the negative. We
exhibit unital Banach lattice algebras of dimension $2$ that admit
no faithful representation---unital or not---on any Dedekind
complete vector lattice (\cref{ex:final}). This answers \cite[Question
5.3]{wickstead2017_questions} and \cite[Problem
2]{wickstead2017_open} in the negative.

Wickstead studied in \cite{wickstead2017_two} the representability of
all unital two-dimensional Banach lattice algebras. He noted that, in
$\R^2$ with the pointwise product, there exists a family of lattice
cones for which the resulting Banach lattice algebras admit a unital
representation if and only if $\alpha $ takes all values between $0$
and $1/2$. When $X$ is finite-dimensional, Wickstead showed that the
only values $\alpha $ can possibly take are $\alpha =0$ or $\alpha
=1/n$, where $n$ is a divisor of the dimension of $X$. It was left
open whether $\alpha $ can take any other values or not. In
\cref{cor:wickstead_question} we show that it cannot. The unital
two-dimensional examples announced above are then the cones in
Wickstead's family corresponding to the forbidden values of $\alpha
$.

The result on positive projections will follow from a version of it
for positive projections with constant diagonal on general $C(K)$
spaces (\cref{thm:main}). As a consequence, we will derive a
condition guaranteeing that a Banach lattice algebra does not admit
any faithful representation---unital or not---on a space of regular operators on a
Dedekind complete Banach lattice (\cref{prop:poison}). We will also
generalize Wickstead's complete description of contractive positive
projections with constant diagonal on finite-dimensional Banach
lattices to $\el p(A)$, for an arbitrary set $A$ and $1\le p<\infty $
(\cref{prop:el1}).

\subsection{Lean formalization}

The main results of this paper have been formalized in Lean 4 using
version \verb|v0.1.0| of
\gh{https://github.com/davidmunozlahoz/banlat}, a Lean library for
Banach lattices. This version of the library depends on version
\verb|v4.30.0| of Mathlib
\gh{https://github.com/leanprover-community/mathlib4}.
The formalization is available at
\gh{https://github.com/davidmunozlahoz/wickstead}.
Each formalized definition and theorem is accompanied by a tag
linking to the line in the repository that contains its Lean
statement.

\subsection{Preliminaries and notation}

For a compact Hausdorff space $K$,
we denote the space of continuous functions on $K$ by $C(K)$ and its
dual, which we identify with the space of all finite signed regular
Borel measures on $K$, by $M(K)$. For every $f \in C(K)$ and $\mu  \in
M(K)$, we denote the dual pairing by $\left< \mu ,f \right>$, whilst
the value of $\mu $ at a Borel set $B$ will be denoted by $\mu (B)$. The
point mass measure at $t \in K$ will be denoted by $\delta _t$.
Every $f \in C(K)$ has an associated multiplication operator
$M_f\colon C(K)\to C(K)$ defined by $M_f(g)=fg$, for all $g \in C(K)$.

For the basic theory of vector and Banach lattices,
we refer the reader to the monographs
\cite{luxemburg_zaanen1971,lindenstrauss_tzafriri1979}. For the theory
of positive operators and related topics, see
\cite{abramovich_aliprantis2002,aliprantis_burkinshaw2006}; in
particular, a detailed treatment of positive projections
can be found in \cite[Chapter 5]{abramovich_aliprantis2002}.

Now we want to introduce a couple of definitions that are
convenient for this paper and, in order to do so, we have to recall first
some basic facts of positive and regular operators.
Let $X$ be a vector lattice.
An operator $T\colon X\to X$ is \emph{central} if there exists
$\lambda >0$ such that $\pm T\le \lambda \id_X$, where $\id_X$ denotes
the identity of $X$. The \emph{center of $X$} is the set formed by all
central operators, and it is denoted by $\mathcal{Z}(X)$.
The central operators on $C(K)$ are precisely the
multiplication operators.
When $X$ is Dedekind complete, the space of
regular operators $\Lr X$ becomes a Dedekind complete vector lattice
under the Riesz--Kantorovich formulas. In this case,
$\mathcal{Z}(X)$ is the ideal generated by $\id_X$, and it is in fact a
band projection. The projection onto this band $\mathcal{D}\colon \Lr
X\to \Lr X$ is usually called the \emph{diagonal projection} or
\emph{the projection onto the diagonal}. For this reason, we introduce
the following terminology.

\begin{defn}<\gh[HasConstantDiagonal]{https://github.com/davidmunozlahoz/Wickstead/blob/b614cd74d9850d812e5a98b715c36b74ec0989dc/Wickstead/StatementAndSetup.lean\#L103}>
Let $X$ be a Dedekind complete vector lattice, and let $T\colon
X\to X$ be a regular operator. We say that $T$ has \emph{constant
diagonal $\alpha $} if $\mathcal{D}(T)=\alpha \id_X$.
\end{defn}

Notice that $T$ having constant diagonal $\alpha $ is the same as
$\alpha \id_X$ being the supremum of all the central operators that lie
below $T$. This is the characterization we used at the beginning of
the introduction.

A \emph{lattice-ordered algebra} is a vector lattice together with a
real algebra structure for which the product of positive elements is
positive. The monograph \cite[Chapter
XVII]{birkhoff1967} and the surveys
\cite{huijsmans_luxemburg1995,keimel1995} provide an overview of lattice-ordered
rings and algebras. A \emph{Banach lattice algebra}
\gh[BanachLatticeAlgebra]{https://github.com/davidmunozlahoz/Wickstead/blob/b614cd74d9850d812e5a98b715c36b74ec0989dc/Wickstead/PropositionFourThree.lean\#L44} is a lattice-ordered algebra
together with a complete lattice norm that is submultiplicative. An
introduction to the theory of Banach lattice algebras can be found in
\cite{wickstead2017_questions}.

\begin{defn}<\gh[LatticeAlgebraRepresentation]{https://github.com/davidmunozlahoz/Wickstead/blob/b614cd74d9850d812e5a98b715c36b74ec0989dc/Wickstead/PropositionFourThree.lean\#L106}>
A \emph{representation} of a lattice-ordered algebra $A$ is a
lattice and algebra homomorphism $\Phi \colon A\to \Lr X$, for some
Dedekind complete vector lattice $X$. We say that $\Phi $ is
\emph{faithful} if it is injective. In the case that $A$ has an
algebraic identity $e$ and $\Phi (e)=\id_X$, we say that $\Phi $ is
\emph{unital}.
\end{defn}

\section{The main theorem}

This section is devoted to proving the following.

\begin{thm}<\gh[main_CK]{https://github.com/davidmunozlahoz/Wickstead/blob/b614cd74d9850d812e5a98b715c36b74ec0989dc/Wickstead/FinalAssembly.lean\#L90}>\label{thm:main}
Let $K$ be a non-empty compact Hausdorff space. Let $P\colon C(K)\to C(K)$
be a positive projection with $P\one_K=\one_K$. Suppose
there exists some $\alpha \ge 0$ satisfying:
\begin{enumerate}
    \item $\alpha \id_{C(K)}\le P$, and
    \item if $0\le M_f\le P$, then $\|f\|_\infty \le \alpha $.
\end{enumerate}
Then either $\alpha =0$ or $\alpha =1/n$ for some $n \in \N$.
\end{thm}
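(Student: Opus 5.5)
Throughout, write $P$ through its kernel. Since $P$ is positive and $P\one_K=\one_K$, there is a weak$^*$-continuous map $t\mapsto \mu_t$ from $K$ into the probability measures in $M(K)$ with $(Pf)(t)=\left<\mu_t,f\right>$. Set $a(r,s)=\mu_r(\{s\})$ and $g(t)=a(t,t)$.

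\textbf{Translating the hypotheses.} I will first rewrite (i), (ii) and the projection identity in terms of the $\mu_t$.
\begin{enumerate}
\item Hypothesis (i) says exactly that $\mu_t\ge \alpha\delta_t$ for every $t$, that is, $g\ge \alpha$ on $K$.
\item For hypothesis (ii), note that $M_f\le P$ with $f\ge 0$ continuous forces $f\le g$ pointwise. So (ii) says that no continuous $f$ with $0\le f\le g$ exceeds $\alpha$ anywhere. By an Urysohn bump argument, for every $\varepsilon>0$ the set $D_\varepsilon=\{t: g(t)<\alpha+\varepsilon\}$ is dense in $K$. Indeed, if some open $U\ne\O$ had $g\ge\alpha+\varepsilon$ on it, a bump of height $\alpha+\varepsilon$ supported in $U$ would contradict (ii).
\item The identity $P^2=P$ reads $\mu_t=\int \mu_r\,d\mu_t(r)$. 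Evaluating at a singleton $\{s\}$ gives the sub-idempotence inequality
\[
a(t,s)=\int a(r,s)\,d\mu_t(r)\;\ge\;\sum_{r} a(t,r)\,a(r,s).
\]
\end{enumerate}

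\textbf{Reduction to a finite block.} Assume $\alpha>0$. For a point $t$, let $F_t$ be the set of atoms $s$ of $\mu_t$ with $a(s,s)\ge\alpha$. I will analyse the positive kernel $a$ restricted to $F_t$, aiming to show it behaves like a stochastic idempotent matrix with diagonal close to $\alpha$.
\begin{enumerate}
\item Every $s$ with $a(t,s)>0$ satisfies $g(s)\ge\alpha$, by (i).
\item Applying the displayed inequality with $s=t$ gives $g(t)\ge g(t)^2+\sum_{r\ne t}a(t,r)a(r,t)$. This controls how much mass can return to $t$.
\item Using density of $D_\varepsilon$ and weak$^*$ continuity of $t\mapsto\mu_t$, I will pick points $t$ with $g(t)<\alpha+\varepsilon$. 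I then want to show that the atoms of $\mu_t$ of mass $\ge\alpha$ form a set of size $n\le 1/\alpha$.
\item I also want to show that the matrix $(a(r,s))_{r,s\in F}$, for a suitable finite $F\ni t$, is within $O(\varepsilon)$ of a stochastic idempotent matrix with all diagonal entries $\alpha$.
\end{enumerate}
The finite-dimensional trace argument of Wickstead then applies: the trace of an idempotent equals its rank, so $|F|\alpha$ is an integer up to $O(\varepsilon)$. Since $|F|\le 1/\alpha$ takes only finitely many values, letting $\varepsilon\to0$ forces $\alpha|F|=1$ for the limiting block, so $\alpha=1/n$.

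\textbf{Main obstacle.} The hard step is to show that at a generic point $t$ (with $g(t)$ near $\alpha$), the measure $\mu_t$ is essentially purely atomic and carried by a finite block that is closed under $r\mapsto\mu_r$. In other words, the diffuse part of $\mu_t$ and the mass leaking out of $F$ must be negligible. My first attempt is a mass-counting argument based on $\mu_t(\{t\})\ge\alpha$:
\begin{enumerate}
\item Iterate the projection identity at the atom $t$.
\item Use lower semicontinuity of $r\mapsto\mu_r(U)$ for open $U$ to show that any mass of $\mu_t$ not sitting on atoms of weight $\ge\alpha$ would propagate to a neighbourhood.
\item Show this propagation produces an open set on which $g\ge\alpha+\delta$, contradicting the density of $D_\delta$.
\end{enumerate}
If this fails, I will fall back on the structure theory of positive unital projections on $C(K)$. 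There the range of $P$ is an AM-space $C(Q)$, and one can study the conditional-expectation-type fibres $\{s:\mu_s=\mu_t\}$ directly, aiming to show that each fibre carries a uniform measure on $n$ atoms.
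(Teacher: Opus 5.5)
There is a genuine gap. Your translation of the hypotheses is fine: for $f\ge 0$ one has $M_f\le P$ iff $f\le g$, so the density of $D_\varepsilon$ is exactly \cref{lem:dense}. But the ``reduction to a finite block'' is where the whole proof lives, and you do not supply it.

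The condition $a(s,s)\ge\alpha$ in your definition of $F_t$ is vacuous, since $g\ge\alpha$ everywhere. So $F_t$ is just the set $J_t$ of all atoms of $\mu_t$, which a priori may be infinite, may contain atoms of tiny mass, and may leave a diffuse remainder. The missing structural fact is that $\mu_s=\mu_t$ for every $s\in J_t$. The paper obtains it in three steps.
\begin{enumerate}
\item The full identity $\mu_t=\int\mu_r\,d\mu_t(r)\ge\sum_r a(t,r)\mu_r$, not just its values on singletons, gives $J_s\subseteq J_t$ for $s\in J_t$. Hence the band of $M(K)$ generated by $\{\mu_s : s\in J_t\}$ is $P^*$-invariant.
\item Because $\alpha>0$, \cref{lem:inv_bandcomp} makes the complement of any invariant band invariant. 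The atom $\delta_t$ lies in one of the two, and either way the only invariant bands are trivial.
\item A strictly positive projection with only trivial invariant closed ideals has rank one (\cref{lem:rank_one}).
\end{enumerate}
Only then do you know $a(t,s)=a(s,s)\ge\alpha$, $|J_t|\le 1/\alpha$, and that $J_t$ is a block closed under $r\mapsto\mu_r$. The diffuse part is then removed by a compactness argument showing that $P^*\nu_t$ has small mass near each atom of $\mu_t$. That argument uses only (i), $P^2=P$ and $P\one_K=\one_K$. Your plan to derive $\nu_t=0$ from the density of $D_\delta$ is therefore aimed at the wrong hypothesis, besides not being formulated.

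Even granting the block structure, the endgame fails as stated. On $J_t$ every row of $(a(r,s))$ equals $(\mu_t(\{s\}))_s$, so the trace is $\sum_s a(s,s)=\sum_s\mu_t(\{s\})=1$ whatever the diagonal is. The trace argument gives $\alpha|F|=1$ only once you know all diagonal entries of the block are (nearly) $\alpha$. Choosing $t\in D_\varepsilon$ controls only $a(t,t)$, not $a(s,s)=\mu_t(\{s\})$ for the other atoms.

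The idea you are missing is to take $t$ with $|J_t|$ maximal. Suppose $\mu_t(\{t_1\})>\alpha+1/n$. Then weak$^*$ continuity, pure atomicity and maximality force each $s$ close to $t_1$ to have exactly one atom in a small neighbourhood $U_1$ of $t_1$, namely $s$ itself. Hence $g>\alpha+1/n$ on a non-empty open set, contradicting density.

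Some genericity of this kind is unavoidable. For $Pf(x)=\frac12\bigl(f(x)+f(-x)\bigr)$ on $C([-1,1])$ one has $\alpha=1/2$ but $\mu_0=\delta_0$. So the fibre of $0$ carries one atom of mass $1\ne\alpha$, and your fallback goal that every fibre carries a uniform measure on $n$ atoms is false.
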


We will construct the proof incrementally along this section. We start
with some lemmas that may be of interest by themselves.

\begin{lem}\label{lem:inv_ideal}
Let $X$ be a Banach lattice, and let $P\colon X\to X$ be a positive
projection. Let $S\subseteq X_+$ be a set, and let $I$ be the ideal
generated by $S$. If $P(S)\subseteq I$, then also $P(I)\subseteq
I$ and $P(\bar{I})\subseteq \bar{I}$.
\end{lem}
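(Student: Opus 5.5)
We prove $P(I)\subseteq I$ first, using the explicit description of the ideal generated by a set of positive elements, and then pass to the closure by continuity.

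Since $S\subseteq X_+$, the ideal generated by $S$ is
\[
I=\Bigl\{x\in X : |x|\le \lambda\,(s_1+\dots+s_k)\ \text{for some } k\in\N,\ s_i\in S,\ \lambda\ge 0\Bigr\}.
\]
Indeed, this set is a linear subspace, it is solid, it contains $S$, and it is contained in every ideal containing $S$.

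Now let $x\in I$, and choose $\lambda$ and $s_1,\dots,s_k$ as in the display. Because $P$ is positive, $|Px|\le P|x|$. Positivity also gives $P|x|\le \lambda\sum_i Ps_i$. By hypothesis each $Ps_i$ lies in $I$, and so does the finite positive combination $\lambda\sum_i Ps_i$. Thus $|Px|$ is dominated by an element of $I$. Since $I$ is solid, $Px\in I$, which proves $P(I)\subseteq I$. The idempotence of $P$ is not needed for this part.

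For the closure, recall that a positive operator between Banach lattices is automatically bounded, so $P$ is continuous. Take $x\in\bar I$ and a sequence $x_n\in I$ with $x_n\to x$. Then $Px_n\in I$ for every $n$, and $Px_n\to Px$ by continuity. Hence $Px\in\bar I$, so $P(\bar I)\subseteq\bar I$.

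The only step that needs any care is the description of $I$ together with the inequality $|Px|\le P|x|$. Both are standard, so I expect no real obstacle. The automatic continuity of positive operators on Banach lattices is the one external fact we use.
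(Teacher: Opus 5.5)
Your proof is correct and follows essentially the same route as the paper's. Both dominate $|Px|\le P|x|$ by a positive combination of the $Ps_i\in I$, conclude by solidity that $Px\in I$, and then pass to $\bar I$ using the continuity of $P$.
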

\begin{proof}
If $x \in I$, then there exist $s_1,\ldots ,s_n \in S$ and
$\lambda _1,\ldots ,\lambda _n \in \R_+$ such that
\[
    |x|\le \lambda _1s_1+ \cdots +\lambda _n s_n.
\]
Using the positivity of $P$ it follows that
\[
|Px|\le P|x|\le \lambda _1Ps_1 + \cdots + \lambda _n Ps_n.
\]
Since $P(S)\subseteq I$, the right hand side in previous equation
belongs to $I$, and therefore $Px \in I$. Thus $P(I)\subseteq I$
and, since $P$ is continuous, $P(\bar{I})\subseteq \bar{I}$.
\end{proof}

\begin{lem}\label{lem:inv_bandcomp}
Let $X$ be a Banach lattice with a quasi-interior point. Let
$P\colon X\to X$ be a positive projection satisfying $\alpha
\id_X\le P$, for some $\alpha >0$. If $B\subseteq X$ is a
projection band invariant under $P$, then so is $B^{d}$.
\end{lem}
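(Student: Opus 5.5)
The plan is to argue entirely at the level of operators, using the band projection onto $B$ and a block decomposition of $P$. Let $Q\colon X\to X$ be the band projection onto $B$, so that $Q^d:=\id_X-Q$ is the band projection onto $B^d$. Both are positive, idempotent, and satisfy $QQ^d=Q^dQ=0$. Invariance of $B$ under $P$ says $PQ=QPQ$, that is, $Q^dPQ=0$. The goal is to show $QPQ^d=0$, which is exactly $P(B^d)\subseteq B^d$.

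First decompose $P=QPQ+QPQ^d+Q^dPQ^d$, using $Q^dPQ=0$. Set $A:=QPQ$, $T:=QPQ^d$ and $D:=Q^dPQ^d$; all three are positive operators. The hypothesis $\alpha\id_X\le P$ gives $A\ge \alpha Q$ and $D\ge\alpha Q^d$, after compressing by the positive operators $Q$ and $Q^d$. We also record that $T=QT=TQ^d$.

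Next compare the $(Q,Q^d)$ block of both sides of $P^2=P$. Since $Q^dPQ=0$, the only contributions come from paths $Q\to Q\to Q^d$ and $Q\to Q^d\to Q^d$, so we get
\[
AT+TD=T .
\]
Multiplying on the left by $A$ and using $A^2=A$ (the $(Q,Q)$ block of $P^2=P$, which again uses $Q^dPQ=0$) yields $AT+ATD=AT$, hence $ATD=0$.

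Now use the diagonal lower bounds. From $A\ge\alpha Q$ and $TD\ge 0$ we get $0=ATD\ge \alpha QTD=\alpha TD\ge 0$, so $TD=0$ because $\alpha>0$. Then, from $D\ge\alpha Q^d$ and $T\ge0$, we get $0=TD\ge \alpha TQ^d=\alpha T\ge 0$, so $T=0$. This is the claim.

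The only delicate points are bookkeeping. One must check that the block identities really follow from $Q^dPQ=0$, and that each inequality multiplies an operator inequality by a positive operator on the correct side. Both are routine, since products of positive operators preserve order.

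Note that this argument does not seem to use the quasi-interior point. If it turns out to be needed, I would expect it to enter only in identifying $B$ with a projection band whose projection $Q$ behaves as above, which already holds for any projection band.
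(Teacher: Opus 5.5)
Your proof is correct, and it takes a genuinely different route from the paper's.

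\textbf{The paper's route.} The paper works with a single vector. It takes a quasi-interior point $e$ and notes that $f=Pe\ge\alpha e$ is again quasi-interior. It then shows $PQf\le Qf$, hence $0=P(Qf-PQf)\ge\alpha(Qf-PQf)\ge 0$, so that $(\id_X-Q)f$ is a fixed point of $P$. By \cref{lem:inv_ideal}, the closed ideal generated by $(\id_X-Q)f$ is invariant, and the quasi-interiority of $f$ identifies this closed ideal with $B^{d}$. That argument genuinely needs the quasi-interior point, and it needs the norm for the closure.

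\textbf{Your route.} You obtain two identities from $P^2=P$, namely $A^2=A$ and $AT+TD=T$, and then use the lower bounds $A\ge\alpha Q$ and $D\ge\alpha Q^{d}$ to kill first $TD$ and then $T$. Each step checks out: the identities follow from $Q^{d}PQ=0$ as you say, and every inequality multiplies by a positive operator on a legitimate side. The key move is left-multiplying by $A$ to get $ATD=0$. The naive estimate $T=AT+TD\ge 2\alpha T$ would only give $T=0$ when $\alpha>1/2$.

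\textbf{What each buys.} Your version is more general: it shows $QPQ^{d}=0$, i.e.\ $PQ=QP$, for any projection band in any vector lattice. Only the positivity of $P-\alpha\id_X$, $Q$ and $\id_X-Q$ is used, with no norm, completeness or quasi-interior point. So the quasi-interior hypothesis in the lemma is superfluous. In the applications (\cref{lem:invariant_ideals_trivial} and \cref{prop:el1}) one could then skip checking that $\bar I$ has a quasi-interior point, though one still needs $I'$ to be a projection band. The paper's argument, in exchange, is more hands-on: it exhibits an explicit fixed vector of $P$ generating $B^{d}$, which fits the ideal-generation viewpoint used throughout that section.
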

\begin{proof}
Let $e$ be a quasi-interior point of $X$, and let $f=Pe$. Since $f=Pe\ge \alpha
e$, and $\alpha >0$, $f$ is also a quasi-interior point. Let
$Q\colon X\to X$ be the band projection onto $B$. Then using the
positivity of $P$, and the fact that $Qf\le f$, it follows that
\[
PQf\le Pf=f.
\]
Since $PQf \in B$, applying $Q$ to this equality yields $PQf\le
Qf$. Thus
\[
0=P(Qf-PQf)\ge \alpha (Qf-PQf)\ge 0
\]
and the equality $PQf=Qf$ follows. Using this, the computation
\[
    (\id_X-Q)f=f-Qf=Pf-PQf=P(\id_X-Q)f
\]
shows that $(\id_X-Q)f$ belongs to the image of $P$.
By \cref{lem:inv_ideal}, the closed ideal generated by $(\id_X-Q)f$ is
invariant under $P$. Since $f$ is a quasi-interior point, this closed ideal is
precisely $B^{d}$. Indeed, for every $x \in X$, find a sequence
$(x_n)$ contained in the ideal generated by $f$ and such that
$x_n\to x$ (this we can do because $f$ is a quasi-interior point).
Then $(\id_X-Q)x_n \to (\id_X-Q)x$, where
the sequence $((\id_X-Q)x_n)$ belongs to the ideal generated by $(\id_X-Q)f$.
\end{proof}

Recall that a projection $P\colon X\to X$ on a vector lattice $X$ is
said to be \emph{strictly positive} if $x>0$ implies $Px>0$.

\begin{lem}\label{lem:rank_one}
Let $X$ be a Banach lattice, and let $P\colon X\to X$ be a
strictly positive projection. If the only closed ideals
invariant under $P$ are
the trivial ones (that is, $\{0\}$ and $X$), then $P$ has rank $1$.
\end{lem}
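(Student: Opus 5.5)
The plan is to show first that the range of $P$ is a sublattice of $X$. Then every nonzero positive element of the range is a quasi-interior point. Finally, a comparison argument shows that any two positive elements of the range are proportional. Throughout, assume $X\neq\{0\}$, so that strict positivity forces $P\neq 0$.

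\emph{Step 1: the range is a sublattice.} Let $y=Py$. Positivity gives $|y|=|Py|\le P|y|$, so $z:=P|y|-|y|\ge 0$. Since $P^2=P$, we get $Pz=P|y|-P|y|=0$. Strict positivity then forces $z=0$, so $P|y|=|y|$ and $|y|$ lies in the range. Consequently $y^{\pm}=\tfrac12(|y|\pm y)$ also lie in the range. Since $P\neq0$, pick $x$ with $Px\neq 0$. Then $u:=|Px|$ is a nonzero positive element with $Pu=u$.

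\emph{Step 2: positive range elements are quasi-interior.} Let $u>0$ with $Pu=u$. Apply \cref{lem:inv_ideal} with $S=\{u\}$: the closed ideal generated by $u$ is invariant under $P$. It is nonzero, so by hypothesis it equals $X$, i.e.\ $u$ is a quasi-interior point. Now suppose $v$ is disjoint from $u$. Then $v$ is disjoint from the closed ideal generated by $u$, because disjoint complements are closed bands. That ideal is $X$, so $v=0$. Hence two disjoint positive elements of the range cannot both be nonzero.

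\emph{Step 3: comparison.} Fix $u$ as above and let $y\ge 0$ with $Py=y$. For each $\lambda\in\R$, the elements $(y-\lambda u)^+$ and $(\lambda u-y)^+$ are disjoint. By Step 1 both lie in the range, so by Step 2 one of them vanishes. Thus for every $\lambda$, either $y\ge\lambda u$ or $y\le\lambda u$. Set $\lambda_0=\sup\{\lambda: y\ge \lambda u\}$. This set contains $0$, and it is bounded because $y\ge\lambda u$ gives $\lambda\|u\|\le\|y\|$. Closedness of the positive cone gives $y\ge\lambda_0u$. For every $\lambda>\lambda_0$ we must have $y\le\lambda u$, and letting $\lambda\downarrow\lambda_0$ gives $y\le\lambda_0u$. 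Hence $y=\lambda_0u$. For an arbitrary $y$ in the range, write $y=y^+-y^-$ with both parts in the range by Step 1. This shows that the range is $\R u$, so $P$ has rank $1$.

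The main obstacle is Step 1. Strict positivity has to be combined with idempotency to show that the range is closed under the lattice operations. Once that is in place, Steps 2 and 3 follow from \cref{lem:inv_ideal} and elementary order arguments.
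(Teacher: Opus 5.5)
Your proof is correct and follows the paper's route. The range of a strictly positive projection is a sublattice (the paper cites \cite[Lemma 5.43]{abramovich_aliprantis2002}, which your Step 1 reproves), and the closed ideal generated by a nonzero positive range element is invariant by \cref{lem:inv_ideal}, hence equal to $X$, so the range has no nonzero disjoint pair; your Step 3 spells out the Archimedean argument the paper leaves implicit when it says that a range of dimension at least $2$ would contain two nonzero disjoint positive vectors.
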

\begin{proof}
Since $P$ is strictly positive, by \cite[Lemma
5.43]{abramovich_aliprantis2002} $P(X)$ is a (closed) vector sublattice
of $X$. If $P(X)$ didn't have dimension $1$, then there would
exist two positive disjoint vectors $x,y \in P(X)$. The closed ideal
generated by $x$ in $X$ is non-trivial (because $y$ does not
belong to the band generated by $x$), and yet invariant by
\cref{lem:inv_ideal}. This contradicts the assumption.
\end{proof}

Let $K$ be a compact Hausdorff space, and let $P\colon C(K)\to C(K)$
and $\alpha \ge 0$ be as in \cref{thm:main}. If $\alpha =0$ we are
done, so we may assume from now on that $\alpha >0$.
The adjoint $P^{*}\colon M(K)\to M(K)$ is again a positive projection
satisfying $\alpha \id_{M(K)}\le P^{*}$. For every $t \in K$, let $\mu
_t=P^{*}\delta _t$. Notice that the map
\[
\begin{array}{cccc}
& K & \longrightarrow & M(K) \\
    & t & \longmapsto & \mu _t \\
\end{array}
\]
is continuous for the weak$^*$ topology on $M(K)$; this fact will be
used several times later on.

Decompose
\[
\mu _t=\sum_{s \in J_t}^{}\lambda (t,s)\delta _s + \nu _t,
\]
where $\nu _t$ is a positive continuous measure (i.e., it is disjoint from every
point mass measure), $J_t\subseteq K$ is an at most countable set of points, and
$\lambda (t,s)>0$ for every $s \in J_t$. Moreover, since $\alpha
\id_{M(K)}\le P^{*}$ and $\alpha >0$, we must have $t \in J_t$ and $\alpha \le \lambda (t,t)$ for all $t \in K$.
Also, it is important to keep in mind that $\mu _s$ is positive and
\[
\|\mu _s\| = \mu _s(K)=\left< P^{*}\delta _s,\one_K \right> = \left<
\delta _s,P\one_K \right> = \left< \delta _s,\one_K \right> = 1.
\]

\begin{lem}
For every $s \in J_t$, $J_s \subseteq J_t$.
\end{lem}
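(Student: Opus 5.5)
Since $P^{*}$ is a projection, $P^{*}\mu_t = P^{*}P^{*}\delta_t = P^{*}\delta_t = \mu_t$. We will use this identity together with positivity and the lower bound $\alpha\id_{M(K)}\le P^{*}$.

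Fix $s\in J_t$ and expand $\mu_t$ using its decomposition:
\[
\mu_t = P^{*}\mu_t = \sum_{r\in J_t}\lambda(t,r)\,P^{*}\delta_r + P^{*}\nu_t = \sum_{r\in J_t}\lambda(t,r)\,\mu_r + P^{*}\nu_t .
\]
The series converges in norm, since $\sum_r\lambda(t,r)\le\|\mu_t\|=1$ and $\|\mu_r\|=1$, and $P^{*}$ is bounded. Every term is a positive measure: $P^{*}$ is positive, so $P^{*}\nu_t\ge0$. Keeping only the term $r=s$ gives
\[
\mu_t \ge \lambda(t,s)\,\mu_s \ge 0, \qquad \lambda(t,s)>0 .
\]

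Now take $u\in J_s$, so that $\mu_s(\{u\})\ge\lambda(s,u)>0$. Evaluating the inequality above on the Borel set $\{u\}$ gives
\[
\mu_t(\{u\}) \ge \lambda(t,s)\,\lambda(s,u) > 0 .
\]
So $\mu_t$ has an atom at $u$. The continuous part $\nu_t$ vanishes on singletons, so $u$ must lie in $J_t$. This proves $J_s\subseteq J_t$.

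The only step needing care is justifying the term-by-term application of $P^{*}$ to the countable sum and the resulting lower bound. Norm convergence, as noted above, handles this. Alternatively, one can avoid the series entirely: write $\mu_t = \lambda(t,s)\delta_s + \rho$ with $\rho\ge0$, and apply $P^{*}$ to get $\mu_t = \lambda(t,s)\mu_s + P^{*}\rho \ge \lambda(t,s)\mu_s$.
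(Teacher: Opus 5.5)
Your proof is correct and follows the paper's argument: both use $P^{*}\mu_t=\mu_t$ to expand $\mu_t$ as $\sum_{r\in J_t}\lambda(t,r)\mu_r+P^{*}\nu_t$, and both use positivity of every term to rule out atoms of $\mu_s$ outside $J_t$. Your version spells out the paper's one-line positivity remark (the bound $\mu_t\ge\lambda(t,s)\mu_s$ evaluated on singletons), and your alternative $\mu_t=\lambda(t,s)\delta_s+\rho$ avoids the countable sum; the hypothesis $\alpha\id_{M(K)}\le P^{*}$ you mention at the start is never actually used.
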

\begin{proof}
Using that $P^{*}$ is a projection, we have
\begin{equation}\label{eq:image_mu}
\sum_{s \in J_t}^{}\lambda (t,s)\delta _s+\nu _t=\mu _t=P^{*}\mu _t=\sum_{s \in J_t}^{}\lambda (t,s) \mu _s + P^{*}\nu _t.
\end{equation}
Notice that all the coefficients are positive. Hence, for this
equality to hold, the discrete part of each $\mu _s$ cannot
contain any point mass measure supported outside $J_t$.
\end{proof}

Fix an arbitrary $t \in K$. Denote by $I$ the order ideal generated by
$\{\, \mu _s : s \in J_t\, \} $ in $M(K)$. Since $P^{*}\mu _s=\mu _s$
for every $s \in J_t$, \cref{lem:inv_ideal} guarantees that both $I$ and
$\bar{I}$ are invariant under $P^{*}$. Since $M(K)$ is
order continuous, $\bar{I}$ is actually a projection band.

\begin{lem}\label{lem:invariant_ideals_trivial}
The only closed invariant ideals of $P^{*}|_{\bar{I}}$ are the
trivial ones.
\end{lem}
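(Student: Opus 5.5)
The plan is to combine \cref{lem:inv_bandcomp}, applied to the restriction $P^{*}|_{\bar I}$, with the fact that the point masses $\delta_s$ are atoms of $M(K)$. Every atom lies in one of the two complementary bands, and invariance then forces that band to swallow all of $\bar I$.

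\emph{Setup.} Since $\bar I$ is invariant under $P^{*}$, the restriction $R=P^{*}|_{\bar I}\colon \bar I\to\bar I$ is a positive projection on the Banach lattice $\bar I$, and it satisfies $\alpha\id_{\bar I}\le R$ with $\alpha>0$. The set $J_t$ is at most countable; enumerate it as $(s_k)$. Then $e=\sum_k 2^{-k}\mu_{s_k}$ converges in norm, because each $\|\mu_{s_k}\|=1$. The element $e$ is a weak unit of the band $\bar I$. Since $M(K)$, and hence $\bar I$, is order continuous (indeed an AL-space), a weak unit is a quasi-interior point. Now let $L\subseteq\bar I$ be a closed ideal invariant under $R$. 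In the order continuous space $\bar I$ a closed ideal is a band, and $\bar I$ is Dedekind complete, so $L$ is a projection band. By \cref{lem:inv_bandcomp}, its disjoint complement $L^{d}$ in $\bar I$ is also invariant under $R$, and $\bar I=L\oplus L^{d}$.

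\emph{Key step.} We have $\delta_t\le\lambda(t,t)^{-1}\mu_t$ because $t\in J_t$. Hence $\delta_t\in I\subseteq\bar I$. Since $\delta_t$ is an atom of $M(K)$, its components in the decomposition $\bar I=L\oplus L^{d}$ are disjoint and both lie below $\delta_t$. So $\delta_t$ belongs entirely to $L$ or entirely to $L^{d}$. Call $B$ the band among $L,L^{d}$ that contains $\delta_t$; it is invariant under $R$. We now show that $B$ exhausts $\bar I$:
\begin{itemize}
    \item By invariance, $\mu_t=P^{*}\delta_t\in B$.
    \item For every $s\in J_t$ we have $0\le\delta_s\le\lambda(t,s)^{-1}\mu_t$, so $\delta_s\in B$.
    \item By invariance again, $\mu_s=P^{*}\delta_s\in B$ for every $s\in J_t$.
\end{itemize}
Thus $I\subseteq B$, and since $B$ is closed, $\bar I\subseteq B$. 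Consequently the other band is $\{0\}$. So either $L=\bar I$ (when $\delta_t\in L$), or $L^{d}=\bar I$ and $L=\{0\}$. This is exactly the statement.

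\emph{Main obstacle.} The delicate point is justifying the hypotheses of \cref{lem:inv_bandcomp}. We must check that $R$ is again a projection on $\bar I$ dominating $\alpha\id_{\bar I}$, which is immediate from invariance. We must also confirm that $e$ is a quasi-interior point of $\bar I$: in an order continuous Banach lattice, the closed ideal generated by a weak unit is the band it generates, which here is $\bar I$. Finally, we must check that closed ideals of $\bar I$ are projection bands. Once these are in place, the atom argument above is routine.
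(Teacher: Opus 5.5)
Your proof is correct and follows essentially the same route as the paper. You use the quasi-interior point $e=\sum_k 2^{-k}\mu_{s_k}$, the fact that closed ideals of the order continuous $\bar I$ are projection bands, \cref{lem:inv_bandcomp} for the complement, and the atom $\delta_t$ forcing one of the two complementary bands to contain every $\mu_s$, $s\in J_t$; your extra justifications (that $\delta_t\le\lambda(t,t)^{-1}\mu_t$ puts $\delta_t$ in $I$, and the symmetric treatment of whichever band contains $\delta_t$ in place of the paper's ``without loss of generality'') simply make the paper's argument more explicit.
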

\begin{proof}
We want to use
\cref{lem:inv_bandcomp}. For this, first note that $\bar{I}$, being a
band in $M(K)$, is itself an order continuous Banach lattice
(actually, it is an AL-space). Also note that $\bar{I}$ has a
quasi-interior point: if $J_t=\{t_i\}_{i=1}^{\infty }$ is
an enumeration of $J_t$, then $e=\sum_{i=1}^{\infty}2^{-i}\mu
_{t_i}$ is a quasi-interior point of $\bar{I}$ (recall that $\|\mu
_s\|=1$).

Suppose $I'\subseteq \bar{I}$ is a closed ideal invariant under
$P^{*}|_{\bar{I}}$. The order
continuity of $\bar{I}$ implies that $I'$ is a projection band in $I'$.
By \cref{lem:inv_bandcomp}, its disjoint complement in
$\bar{I}$ is also invariant under $P^{*}|_{\bar{I}}$. Now $\delta _t
\in \bar{I}$ is an
atom, and therefore must belong to either $I'$ or its disjoint
complement. Without loss of generality, assume it belongs to $I'$.
Then the invariance of $I'$ under $P^{*}$ implies that
\[
P^{*}\delta _t=\mu _t=\sum_{s \in J_t}^{}\lambda (t,s)\delta
_s+\nu _t \in I'.
\]
Since $\lambda (t,s)>0$ for every $s \in J_t$,
and $I'$ is an ideal, $\delta _s \in I'$ and, using again
invariance, $\mu _s \in I'$ for all $s \in J_t$. But
$\bar{I}$ is precisely the closed ideal generated by $\mu _s$, so
that $I'=\bar{I}$.
\end{proof}

By \cref{lem:rank_one}, $P^{*}(\bar{I})$ has rank one. That is, there
exist a measure $\rho \in M(K)_+$ and scalars $\lambda _s \in \R_+$
such that $\mu _s=P^{*}\mu _s=\lambda _s \rho $ for all $s \in J_t$.
But since $\mu _s(K)=1$ for every $s \in J_t$, $\lambda _s$ does not
depend on $s$, and therefore $\mu _s=\mu _t$ for every $s \in J_t$.
This implies that the family of all distinct sets $J_t$ forms a
partition of $K$.

\begin{lem}\label{lem:relation_Js}
For all $s \in K$, either $J_s=J_t$ (when $s \in J_t$) or $J_s \cap
J_t=\O$ (when $s \not\in J_t$).
\end{lem}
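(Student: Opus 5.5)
We will use the fact, established just before the statement, that $\mu_s=\mu_t$ for every $s\in J_t$. The key observation is that $J_s$ is determined by $\mu_s$ alone. Indeed, $J_s$ is exactly the set of atoms of $\mu_s$, i.e.\ the set of points $r\in K$ with $\mu_s(\{r\})>0$. This holds because the decomposition $\mu_s=\sum_{r\in J_s}\lambda(s,r)\delta_r+\nu_s$ into a discrete part and a continuous part is unique, and all coefficients $\lambda(s,r)$ are strictly positive.

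\emph{Case $s\in J_t$.} Here $\mu_s=\mu_t$, so the two measures have the same set of atoms, and therefore $J_s=J_t$.

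\emph{Case $s\notin J_t$.} Suppose, for contradiction, that some $r\in J_s\cap J_t$ exists.
\begin{itemize}
\item Applying the previous case to the pair $(t,r)$, since $r\in J_t$, gives $J_r=J_t$.
\item Applying it to the pair $(s,r)$, since $r\in J_s$, gives $J_r=J_s$.
\end{itemize}
Hence $J_s=J_t$. We showed earlier that $s\in J_s$ for every $s$, because $\lambda(s,s)\ge\alpha>0$. So $s\in J_t$, contradicting the case assumption. Therefore $J_s\cap J_t=\O$.

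The only point needing care is that the argument preceding this lemma was carried out for a fixed but arbitrary $t$. Its conclusion, $\mu_s=\mu_t$ for all $s\in J_t$, therefore holds for every $t\in K$, and in particular for $t$ replaced by $r$ or by $s$ as used above. Granting that, the proof is just this bookkeeping together with uniqueness of the atomic decomposition, and I expect no real obstacle.
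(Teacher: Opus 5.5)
Your proof is correct and is essentially the paper's argument. The paper uses $\mu_s=\mu_t$ for $s\in J_t$ in the first case. In the second case it takes $s'\in J_s\cap J_t$, deduces $\mu_s=\mu_{s'}=\mu_t$, and concludes $s\in J_s=J_t$; your observation that $J_s$ is exactly the atom set of $\mu_s$ just makes explicit the paper's ``obviously''.
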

\begin{proof}
When $s \in J_t$, we have noted already that $\mu _s=\mu _t$, so
obviously $J_s=J_t$. If $s' \in J_s \cap
J_t$, then again we would have $\mu _s=\mu _{s'}=\mu _t$, and
therefore $s \in J_s=J_t$.
\end{proof}

The fact that $\mu _s=\mu _t$ for all $s \in J_t$ has further
implications on the expression of $\mu _t$. First, since
the continuous parts of $\mu _t$ and $\mu _s$ must be equal, $\nu _t=\nu
_s$ holds for every $s \in J_t$. Second, since the coefficient of each
point evaluation must be equal, $\lambda (t,s)=\lambda (s,s)$ holds
for every $s \in J_t$.
Hence $\lambda (t,s)\ge \alpha $ for every $s \in J_t$. Moreover,
since $\alpha >0$ and $\|\mu _s\|=1$, $J_t$ must be finite and satisfy
$|J_t|\le 1/\alpha $.

Next we want to show that $\nu _t=0$. For this we need the following
topological lemma.

\begin{lem}\label{lem:small_pstarnu}
For every $\varepsilon >0$ and $t' \in J_t$, there exists an
open neighborhood $V$ of $t'$ such that $(P^{*}\nu _t)(V)<\varepsilon $.
\end{lem}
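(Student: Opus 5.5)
The plan is to express $P^{*}\nu _t$ as an average of the measures $\mu _s$ against $\nu _t$, and to exploit that $\nu _t$ is continuous while the atom at $t'$ is carried only by the finitely many $\mu _s$ with $s \in J_t$.

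A naive approach via the atom of $P^{*}\nu _t$ at $t'$ fails. Using \eqref{eq:image_mu} together with $\mu _s=\mu _t$ for $s \in J_t$, one gets $P^{*}\nu _t=\nu _t(K)\,\mu _t$, and this may well charge $\{t'\}$. So I would not argue through outer regularity at the atom. Instead I would work with continuous test functions and the representation
\[
\left< P^{*}\nu _t,f \right> = \left< \nu _t,Pf \right> = \int_K (Pf)(s)\,d\nu _t(s) = \int_K \left< \mu _s,f \right> d\nu _t(s), \qquad f \in C(K),
\]
which holds because $(Pf)(s)=\left< \delta _s,Pf \right> = \left< \mu _s,f \right>$.

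Let $\mathcal{F}$ be the family of $f \in C(K)$ with $0\le f\le \one_K$ and $f(t')=1$. It is downward directed, since it is closed under finite minima. By Urysohn's lemma and the regularity of each $\mu _s$,
\[
\inf_{f \in \mathcal{F}}\left< \mu _s,f \right> = \mu _s(\{t'\}) =: h(s)
\]
for every $s \in K$. I would then identify $h$. If $s \in J_t$, then $\mu _s=\mu _t$ and $h(s)=\lambda (t,t')$. If $s\notin J_t$, then $J_s\cap J_t=\O$ by \cref{lem:relation_Js}, so $t'\notin J_s$ and $\mu _s$ has no atom at $t'$, giving $h(s)=0$. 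Thus $h$ is supported on the finite set $J_t$. Since $\nu _t$ is a continuous measure, this gives $\int h\,d\nu _t=0$.

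The main obstacle is passing to the limit under the integral. The net $(Pf)_{f \in \mathcal{F}}$ consists of continuous functions, decreases, and converges pointwise to $h$, but the index set is a net rather than a sequence, since $K$ need not be metrizable. I would invoke the $\tau$-smoothness of Radon measures: for a positive regular Borel measure, the integrals of a downward directed net of continuous functions converge to the integral of its pointwise infimum. This gives
\[
\inf_{f \in \mathcal{F}}\left< P^{*}\nu _t,f \right> = \int_K h\,d\nu _t = 0.
\]
If one prefers to avoid citing this fact, it can be derived directly from regularity of $\nu _t$ together with a Dini-type argument on compact sets.

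To finish, given $\varepsilon >0$, choose $f \in \mathcal{F}$ with $\left< P^{*}\nu _t,f \right> <\varepsilon /2$. Let $V=\{\,f>1/2\,\}$, which is an open neighbourhood of $t'$. Since $P^{*}\nu _t\ge 0$ and $2f\ge \one_V$, we get
\[
(P^{*}\nu _t)(V)\le 2\left< P^{*}\nu _t,f \right> <\varepsilon .
\]
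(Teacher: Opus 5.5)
Your proof is correct, and it packages the argument differently from the paper. The paper works with a fixed $\varepsilon$. It chooses disjoint neighbourhoods of the finitely many points of $J_t$ with small $\nu_t$-mass. It then uses \cref{lem:relation_Js} and weak$^*$ continuity of $s\mapsto\mu_s$ to cover the compact set $K\setminus U$ by finitely many $W_{s_i}$, and takes $V=\bigcap_i V_{s_i}$, so that $\mu_s(V)<\varepsilon/2$ uniformly for $s\notin U$. Finally it bounds $\langle \nu_t,Pf\rangle$ for test functions supported in $V$ via a partition of unity.

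You instead identify the pointwise infimum $h(s)=\mu_s(\{t'\})$ of the decreasing net $(Pf)_{f\in\mathcal F}$, note that it vanishes off $J_t$, and let $\tau$-smoothness of $\nu_t$ do the uniformisation. The paper's finite-subcover step is essentially the Dini argument inside that theorem.

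Your route has several advantages. It needs only that $J_t$ is countable, avoids the partition of unity, and converts the test-function estimate into an open set more cleanly via $V=\{f>1/2\}$. It also stays with the continuous functions $(Pf)(s)=\langle\mu_s,f\rangle$ throughout. By contrast, $s\mapsto\mu_s(V)$ for open $V$ is only lower semicontinuous, so the paper's passage from $V_s$ to $W_s$ tacitly needs $V_s$ chosen with, e.g., $\mu_s(\overline{V_s})<\varepsilon/2$. The price is citing $\tau$-smoothness, for instance monotone convergence for upward directed families of lower semicontinuous functions applied to $\one_K-Pf$, whereas the paper is self-contained.

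Finally, your identity $P^*\nu_t=\nu_t(K)\mu_t$ is worth more than you use it for. Your argument gives $(P^*\nu_t)(\{t'\})=\inf_{f\in\mathcal F}\langle P^*\nu_t,f\rangle=0$. Combined with the identity, this yields $\nu_t(K)\lambda(t,t')=0$ and hence $\nu_t=0$ at once, bypassing the computation in the paper's next lemma.
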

\begin{proof}
Let $J_t=\{t_1,\ldots ,t_n\}$, and let $U_1',\ldots ,U_n'$ be
pairwise disjoint open neighborhoods of $t_1,\ldots ,t_n$,
respectively. Let $U'=U_1'\cup \cdots \cup U_n'$.
Since $\nu _t$ is a regular continuous measure, we may
choose these neighborhoods small enough so as to have $\nu
_t(U')<\varepsilon /2$. Let also $U_i$ be an open neighborhood of
$t_i$ satisfying $\overline{U_i}\subseteq U_i'$ for $i=1,\ldots
,n$, and denote $U=U_1\cup \cdots \cup U_n$.

Fix $t' \in J_t$.
For every $s \not\in U$, $J_s$ does not contain any of the points
in $J_t$ by \cref{lem:relation_Js}. Therefore there exists an
open neighborhood $V_s$ of $t'$ such that $\mu
_s(V_s)<\varepsilon /2$. Recall that the map $s'\mapsto \mu _{s'}$
is continuous in the weak$^*$ topology. Hence there exists an open
neighborhood $W_s$ of $s$ such that $\mu _{s'}(V_s)<\varepsilon /2$
for all $s' \in W_s$.

This way the $\{W_s\}_{s \in K\setminus U}$ form an open covering
of the set $K\setminus U$. Since the latter is compact, we can
extract a finite subcovering:
\[
K\setminus U\subseteq W_{s_1}\cup \cdots \cup W_{s_m}.
\]
Consider the set $V=V_{s_1}\cap \cdots \cap V_{s_m}$, which is
again an open neighborhood of $t'$. Notice that, if $s \not\in U$, then
$s \in W_{s_i}$ for some $i \in \{1,\ldots ,m\}$, and therefore
\[
    \mu _s(V)\le \mu _s(V_{s_i})<\varepsilon /2.
\]

To prove that $V$ is the set we wanted to construct,
we need to show that, for every $f \in C(K)$ satisfying
$0\le f\le \one_K$ and $\supp f \subseteq V$, we have
\[
\left< P^{*}\nu _t,f \right> =\left< \nu _t, Pf \right> < \varepsilon .
\]
Fix such an $f \in C(K)$. Let $U_1=U'$, $U_2=K\setminus \bar{U}$,
and let $\{f_1,f_2\}$ be a partition of unit subordinate to the
open covering $\{U_1,U_2\}$. On the one hand, for every $s \not\in U$:
\[
    (Pf)(s)f_2(s)\le \left< P^{*}\delta _s,f \right> = \left< \mu
    _s, f\right> \le \mu _s(V)<\varepsilon /2.
\]
Since $f_2(s)=0$ for $s \in U$, this shows that
$(Pf)f_2<\varepsilon /2 \one_K$. In particular,
\[
    \left< \nu
    _t,(Pf)f_2 \right> \le \varepsilon /2 \nu _t(K)\le \varepsilon /2.
\]
On the other hand, since
$\|Pf\|_\infty \le \|f\|_\infty \|P\one_K\|_\infty \le 1$,
\[
\left< \nu _t, (Pf)f_1 \right> \le \left< \nu _t, f_1 \right> \le
\nu _t(U') <\varepsilon /2.
\]
Therefore
\[
\left< \nu _t, Pf \right> = \left< \nu _t, (Pf)f_1 \right> +
\left< \nu _t, (Pf)f_2 \right> < \varepsilon.\qedhere
\]
\end{proof}

\begin{lem}
For every $t \in K$, $\nu _t=0$.
\end{lem}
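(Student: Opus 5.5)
The plan is to combine the fixed-point identity \eqref{eq:image_mu} with the fact, established just before \cref{lem:relation_Js}, that $\mu_s=\mu_t$ for every $s\in J_t$. This turns $P^{*}\nu_t$ into a scalar multiple of $\mu_t$. That multiple carries an atom at $t$, and \cref{lem:small_pstarnu} forbids such an atom unless the scalar is zero.

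Fix $t\in K$ and write $c=\sum_{s\in J_t}\lambda(t,s)$; this is a finite sum, since $J_t$ is finite. Evaluating $\mu_t$ at $\one_K$ and using $\|\mu_t\|=\mu_t(K)=1$ gives
\[
1=c+\nu_t(K), \qquad\text{so}\qquad c=1-\nu_t(K).
\]
Substituting $\mu_s=\mu_t$ for $s\in J_t$ into \eqref{eq:image_mu} yields
\[
\mu_t=\sum_{s\in J_t}\lambda(t,s)\mu_s+P^{*}\nu_t = c\,\mu_t+P^{*}\nu_t .
\]
Hence
\[
P^{*}\nu_t=(1-c)\mu_t=\nu_t(K)\,\mu_t .
\]

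Now look at the atom at $t$. Since $t\in J_t$ and $\lambda(t,t)\ge\alpha>0$, we get
\[
(P^{*}\nu_t)(\{t\})=\nu_t(K)\,\lambda(t,t)\ge \alpha\,\nu_t(K).
\]
Apply \cref{lem:small_pstarnu} with $t'=t$: for every $\varepsilon>0$ there is an open neighborhood $V$ of $t$ with $(P^{*}\nu_t)(V)<\varepsilon$. Since $P^{*}\nu_t$ is positive, this gives
\[
\alpha\,\nu_t(K)\le (P^{*}\nu_t)(\{t\})\le (P^{*}\nu_t)(V)<\varepsilon .
\]
Letting $\varepsilon\to0$ and using $\alpha>0$ gives $\nu_t(K)=0$. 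Because $\nu_t$ is a positive measure, it follows that $\nu_t=0$.

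The only delicate point is the identification $P^{*}\nu_t=\nu_t(K)\mu_t$, which requires the coefficient comparison in \eqref{eq:image_mu} to be done with the already-established equality $\mu_s=\mu_t$. Once that is in place, the conclusion follows directly from the topological \cref{lem:small_pstarnu}, which is where the real work lies.
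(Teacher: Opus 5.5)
Your proof is correct, and it is a more direct route than the paper's, although the ingredients are the same: the fixed-point identity \eqref{eq:image_mu}, the equality $\mu_s=\mu_t$ for $s\in J_t$, the normalization $\mu_t(K)=1$, and \cref{lem:small_pstarnu}, used to rule out atoms of $P^{*}\nu_t$ at points of $J_t$.

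The paper substitutes only the weaker facts $J_s=J_t$ and $\nu_s=\nu_t$ into \eqref{eq:image_mu}. It then evaluates at small neighborhoods $V$ of every $t'\in J_t$, shrinking $V$ so that $\nu_t(V)$ is small and $V$ misses the other points of $J_t$, to match atomic coefficients: $\lambda(t,t')=\sum_{s\in J_t}\lambda(t,s)\lambda(s,t')$. From this it gets $\nu_t=c\,\nu_t+P^{*}\nu_t$, applies $P^{*}$ again to obtain $P^{*}\nu_t=0$, and finishes with a case split on whether $c=1$.

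You substitute the full equality $\mu_s=\mu_t$, which gives $P^{*}\nu_t=\nu_t(K)\,\mu_t$ purely algebraically. A single application of \cref{lem:small_pstarnu} at $t'=t$, together with positivity of $P^{*}\nu_t$ and $\lambda(t,t)\ge\alpha>0$, then finishes the argument. This avoids the coefficient matching at every point of $J_t$, the second application of $P^{*}$, and the case analysis.

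In fact the paper's coefficient identity already contains your conclusion. Since $\mu_s=\mu_t$ forces $\lambda(s,t')=\lambda(t,t')$ for $s,t'\in J_t$, that identity reads $\lambda(t,t')=c\,\lambda(t,t')$. This gives $c=1$, and hence $\nu_t(K)=1-c=0$, immediately.
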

\begin{proof}
Expanding the right hand side of \eqref{eq:image_mu}, we have the following identity:
\begin{equation}\label{eq:image_mu_expanded}
\sum_{s \in J_t}^{}\lambda (t,s)\delta _s+\nu _t=\sum_{s \in
J_t}^{}\sum_{s' \in J_t}^{} \lambda (t,s)\lambda (s,s')\delta
_{s'}+ \sum_{s \in J_t}^{}\lambda (t,s)\nu _t + P^{*}\nu _t,
\end{equation}
where we are using that $J_s=J_t$ and also that $\nu _s=\nu _t$
for every $s \in J_t$. Fix
$t' \in J_t$. According to \cref{lem:small_pstarnu}, for every
$\varepsilon >0$ there exists an open
neighborhood $V$ of $t'$ such that $(P^{*}\nu _t)(V)<\varepsilon $.
By making $V$ smaller, we can assume that also $\nu_t
(V)<\varepsilon $ and that $t'' \not\in V$ for every other $t'' \in
J_t$ different from $t'$. Evaluating the above expression at $V$
yields
\[
\lambda (t,t')+\nu _t(V)=\sum_{s \in J_t}^{}\lambda (t,s)\lambda
(s,t')+\sum_{s \in J_t}^{}\lambda (t,s)\nu _t(V)+(P^{*}\nu _t)(V).
\]
Using that $\sum_{s \in J_t}^{}\lambda (t,s)\le 1$, rearrange the
previous expression to get
\[
\bigg| \lambda (t,t')-\sum_{s \in J_t}^{}\lambda (t,s)\lambda
(s,t') \bigg| < 3\varepsilon.
\]
Since $\varepsilon >0$ was arbitrary, it follows that
$\lambda (t,t')=\sum_{s \in J_t}^{}\lambda (t,s)\lambda (s,t')$.

Using these identities for every $t' \in J_t$,
\eqref{eq:image_mu_expanded} becomes
\[
\nu _t=\sum_{s \in J_t}^{} \lambda (t,s)\nu _t + P^{*}\nu _t.
\]
Applying $P^{*}$ to the above equation, and the fact that $\sum_{s \in
J_t}^{} \lambda (t,s)>0$, yields $P^{*}\nu _t=0$. Therefore
\[
\nu _t=\sum_{s \in J_t}^{} \lambda (t,s)\nu _t
\]
and either
$\nu _t=0$, in which case we are done, or $\sum_{s \in J_t}^{} \lambda
(t,s)=1$. But since
\[
1=\mu _t(K)=\sum_{s \in J_t}^{} \lambda (t,s) + \nu _t(K),
\]
in the latter case we also end up having $\nu _t=0$.
\end{proof}

So far we have proved that, for every $t \in K$,
\[
\mu _t=\sum_{s \in J_t}^{}\lambda (t,s)\delta _s,
\]
where $\alpha \le \lambda (t,s)$ and $J_t$ is finite with uniformly
bounded size $|J_t|\le 1/\alpha $. Up to this point we have not used
that $\alpha \id_{C(K)}$ is the biggest multiplication operator below
$P$, only that $\alpha \id_{C(K)}\le P$. Now this fact has
to come into play.

\begin{lem}\label{lem:dense}
For every $n \in \N$, there exists a dense set $D_n\subseteq K$ such
that $\lambda (t,t)\le \alpha +1/n$ for all $t \in D_n$.
\end{lem}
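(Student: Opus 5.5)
We argue by contradiction, using hypothesis (ii) of \cref{thm:main}. Fix $n\in\N$ and let $D_n=\{\,t\in K:\lambda(t,t)\le\alpha+1/n\,\}$. If $D_n$ is not dense, there is a non-empty open set $W\subseteq K$ with $\lambda(t,t)>\alpha+1/n$ for every $t\in W$. Pick $t_0\in W$ and, by Urysohn's lemma, a continuous $g\colon K\to[0,1/n]$ with $g(t_0)=1/n$ and $\supp g\subseteq W$. Set $f=\alpha\one_K+g$. Then $\|f\|_\infty\ge f(t_0)=\alpha+1/n>\alpha$. If we show $0\le M_f\le P$, hypothesis (ii) gives $\|f\|_\infty\le\alpha$, a contradiction, so $D_n$ is dense.

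The key step is the inequality $M_f\le P$, i.e.\ $(Ph)(t)\ge f(t)h(t)$ for all $h\in C(K)_+$ and $t\in K$. Since $\nu_t=0$, we have
\[
(Ph)(t)=\left<\mu_t,h\right>=\sum_{s\in J_t}\lambda(t,s)h(s)\ge\lambda(t,t)h(t),
\]
because all coefficients are positive, $h\ge0$, and $t\in J_t$. For $t\in W$ this gives $(Ph)(t)\ge(\alpha+1/n)h(t)\ge f(t)h(t)$, since $g\le1/n$. For $t\notin W$ we have $g(t)=0$, so $f(t)h(t)=\alpha h(t)\le\lambda(t,t)h(t)\le(Ph)(t)$, using $\lambda(t,t)\ge\alpha$. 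Hence $P-M_f\ge0$. Positivity of $M_f$ is clear since $f\ge0$.

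There is no real obstacle here. The only points needing care are that the pointwise estimate uses the finite discrete form of $\mu_t$ already established (the previous lemma gives $\nu_t=0$), and that the bound $g\le 1/n$ must hold everywhere on $W$, which is why the bump is capped at $1/n$ rather than being tailored to $\lambda(t,t)$. Note also that no continuity of $t\mapsto\lambda(t,t)$ is needed, because the open set $W$ comes directly from the failure of density.
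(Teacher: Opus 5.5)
Your proof is correct and follows the paper's argument. You assume a non-empty open set on which $\lambda(t,t)>\alpha+1/n$, place a Urysohn bump there, use $(Ph)(t)\ge\lambda(t,t)h(t)$ to get a multiplication operator below $P$ of sup norm $\alpha+1/n$, and contradict (ii). The only cosmetic difference is the multiplier: the paper uses $(\alpha+1/n)f$ with $f$ vanishing off $U$, so off $U$ it needs only $Ph\ge 0$, whereas you use $\alpha\one_K+g$ and invoke $\lambda(t,t)\ge\alpha$ off $W$. Also, you do not need $\nu_t=0$: the bound $\left<\mu_t,h\right>\ge\lambda(t,t)h(t)$ holds by positivity alone.
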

\begin{proof}
Let $n \in \N$. In order to get a contradiction, suppose there
exists an open set $U\subseteq K$ such that $\lambda (t,t)>\alpha
+1/n$ for all $t \in U$. Let $V$ be an open set with
$\bar{V}\subseteq U$. By Urysohn's lemma, there exists a continuous
function $f \in C(K)$ such that $0\le f\le \one_K$,
$\supp f\subseteq U$, and $f|_V=1$.

Set $g=(\alpha +1/n)f$. Then for every $h \in C(K)_+$ and $t \in
U$:
\[
g(t)h(t)\le (\alpha +1/n)h(t)\le \lambda (t,t)h(t)\le (Ph)(t).
\]
Also, $g(t)h(t)=0\le (Ph)(t)$ whenever $t
\not\in U$. Hence $M_g\le P$, but $\|g\|_\infty =\alpha
+1/n>\alpha $, contradicting the assumption.
\end{proof}

We can now use this fact to obtain the final lemma.

\begin{lem}
There exists $t \in K$ such that $\lambda (t,s)=\alpha $ for all
$s \in J_t$.
\end{lem}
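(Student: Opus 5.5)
The plan is to find a nonempty open set $O\subseteq K$ on which the function $\varphi(s)=\lambda(s,s)$ is continuous, and then to use \cref{lem:dense} to force $\varphi\equiv\alpha$ on $O$. Since $\lambda(t,s)=\lambda(s,s)$ for $s\in J_t$, and $J_s=J_t\subseteq O$ whenever $t\in O$ (to be checked below), any $t\in O$ will then satisfy the statement.

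\emph{Step 1: choice of $O$.} Since $|J_t|\le 1/\alpha$ for all $t$, the number $N=\max_{t\in K}|J_t|$ exists. Set $O=\{t\in K: |J_t|=N\}$, which is nonempty.
\begin{itemize}
\item[(a)] $O$ is a union of classes. If $t\in O$ and $s\in J_t$, then $J_s=J_t$ by \cref{lem:relation_Js}, so $s\in O$.
\item[(b)] $O$ is open. Fix $t\in O$ with $J_t=\{t_1,\dots,t_N\}$, and choose open sets $U_1,\dots,U_N$ with pairwise disjoint closures and $t_i\in U_i$. Weak$^*$ continuity of $s\mapsto\mu_s$ gives lower semicontinuity on open sets: pick Urysohn functions $0\le f_i\le\one_K$ with $\supp f_i\subseteq U_i$ and $\langle\mu_t,f_i\rangle>\lambda(t,t_i)-\delta$. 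Then for $s$ near $t$ we get $\mu_s(U_i)>\lambda(t,t_i)-2\delta>0$ for every $i$.
\item[(c)] Consequence. Since $\mu_s$ is purely atomic (we showed $\nu_s=0$), $J_s$ meets every $U_i$, so $|J_s|\ge N$. By maximality $|J_s|=N$, and $J_s$ has exactly one point in each $U_i$ and none outside $\bigcup_i U_i$. Hence $O$ is open.
\end{itemize}

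\emph{Step 2: continuity of $\varphi$ on $O$.} Keep the notation of Step 1, with $\delta$ arbitrary.
\begin{itemize}
\item[(a)] Two-sided estimate. For $s$ near $t$ we have $\mu_s(U_i)>\lambda(t,t_i)-2\delta$ for every $i$. Also $\sum_i\mu_s(U_i)=1=\sum_i\lambda(t,t_i)$, because $\mu_s$ is carried by $\bigcup_i U_i$ and $\nu_t=0$. Together these give $|\mu_s(U_i)-\lambda(t,t_i)|\le 2N\delta$.
\item[(b)] Reading off $\varphi$. If in addition $s\in U_1$, then the unique atom of $\mu_s$ in $U_1$ is $s$ itself, since $s\in J_s$. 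Hence $\varphi(s)=\lambda(s,s)=\mu_s(U_1)$, which is within $2N\delta$ of $\lambda(t,t_1)$.
\item[(c)] Taking $t=t_1$ (any point of $O$ may play this role), this shows that $\varphi$ is continuous at every point of $O$.
\end{itemize}

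\emph{Step 3: conclusion.} By \cref{lem:dense}, $D_n\cap O$ is dense in the open set $O$, and $\varphi\le\alpha+1/n$ on $D_n$. Continuity of $\varphi$ on $O$ gives $\varphi\le\alpha+1/n$ on all of $O$ for every $n$, so $\varphi\le\alpha$ on $O$. Combined with $\lambda(s,s)\ge\alpha$, this yields $\varphi\equiv\alpha$ on $O$. For any $t\in O$ and $s\in J_t\subseteq O$ we then get $\lambda(t,s)=\lambda(s,s)=\alpha$.

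The main obstacle is Step 1(b) and Step 2. They rely on the lower semicontinuity $\liminf_{s\to t}\mu_s(U)\ge\mu_t(U)$ for open $U$, on the fact that atoms carry mass at least $\alpha$, and on maximality of $N$. Together these pin down the atoms of nearby $\mu_s$ one per $U_i$, so that the masses converge.
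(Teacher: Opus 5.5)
Your proof is correct and takes essentially the same route as the paper. Both arguments choose $t$ with $|J_t|$ maximal and use weak$^*$ lower semicontinuity of $s\mapsto\mu_s(U_i)$ on disjoint neighbourhoods to force exactly one atom of $\mu_s$ in each $U_i$, which gives $\lambda(s,s)=\mu_s(U_1)$ for $s\in U_1$ near $t_1$ and then conflicts with \cref{lem:dense}. The paper phrases this as a contradiction that uses only the lower bound $\mu_s(U_1)>\alpha+1/n$, so your upper estimate in Step 2(a) via total mass $1$ is harmless but unnecessary: lower semicontinuity of $\varphi$ on $O$ already makes $\{\varphi\le\alpha+1/n\}$ relatively closed in $O$.
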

\begin{proof}
Since $|J_s|\le 1/\alpha $ for all $s \in K$, we can choose $t \in
K$ for which the cardinality of $J_t$ is maximal. We claim that
$\lambda (t,s)=\alpha $ for all $s \in J_t$. To show this, we will
proceed by contradiction: suppose there exist some $t_1\in J_t$
and some $n \in \N$ for which $\lambda (t,t_1)>\alpha + 1/n$.

Since $J_{t_1}=J_t$, $J_{t_1}$ also has maximal cardinality.
Write $J_t=\{t_1,\ldots ,t_m\}$, and let $U_1,\ldots ,U_m$ be
pairwise disjoint open neighborhoods of $t_1,\ldots ,t_m$.
By the weak$^*$ continuity of $s \mapsto \mu _s$, there exists an
open neighborhood $U$ of $t_1$ such that, whenever $s \in U$,
$\mu _s(U_1)>\alpha +1/n$ (recall that $\mu _{t_1}(U_1)>\alpha
+1/n$) and
$\mu _s(U_i)>\alpha /2$ (recall that $\mu _{t_1}(U_i)\ge \alpha $) for $i=2,\ldots ,m$. This means that, for
every $s \in U$, $J_s$ must contain at least a point in each $U_1,\ldots
,U_m$. Since these sets are pairwise disjoint, $J_s$ contains at
least $m$ points. The maximality of $m$ implies then that $J_s$
must contain exactly $m$ points, one in each of the sets
$U_1,\ldots ,U_m$.

In particular, for the $s \in U\cap U_1$, the point of $J_s$ that
belongs to $U_1$ is precisely $s$. And since $\mu _s(U_1)>\alpha
+1/n$, and the only point of $J_s$ that belongs to $U_1$ is $s$, it
follows that $\lambda (s,s)>\alpha +1/n$ for every $s \in U\cap U_1$.
This contradicts \cref{lem:dense}.
\end{proof}

Pick $t \in K$ as in the lemma above. Then
\[
\mu _t=\sum_{s \in J_t}^{}\alpha \delta _s
\]
so that
\[
1=\mu _t(K)=\sum_{s \in J_t}^{} \alpha =\alpha |J_t|
\]
which implies $\alpha =1/|J_t|$. This completes the proof of
\cref{thm:main}.

We conclude this section with a general example showing how each of
the values $0$ and $1/n$, for $n \in \N$, can be attained.

\begin{example}
Let $K$ be a compact Hausdorff space. Let $G$ be a finite subgroup of the
homeomorphism group of $K$. Then the map $P\colon C(K)\to C(K)$
defined by
\[
Pf=\frac{1}{|G|}\sum_{\sigma \in G}^{} f\circ
\sigma,\quad\text{for }f \in C(K),
\]
is a positive projection that satisfies the conditions of
\cref{thm:main} with $\alpha =1/|G|$. Indeed,
it is immediate to check that $P$ is a positive projection such
that $1/|G| \id_{C(K)}\le P$ and $P\one_K=\one_K$. Suppose
$0\le M_h\le P$ for some $h \in C(K)_+$. Given $t \in K$,
choose an open neighborhood $U$ of $t$ such that $\sigma (t)
\not\in U$ for all $\sigma \in G\setminus \{\id_K\}$, and let $f
\in C(K)$ be a continuous function satisfying $f(t)=1$, $0\le f\le
\one_K$, and $\supp f \subseteq U$. Then
\[
h(t)=M_h(f)(t)\le (Pf)(t)=\frac{1}{|G|}.
\]
Since this is true for every $t \in K$, $\|h\|_\infty \le 1/|G|$.

To construct an example with $\alpha =0$, let $\mu \in
M(K)_+$ be a continuous measure with $\mu (K)=1$, and define
\[
Pf=\one_K\int_{K}^{} f\, d \mu\quad\text{for }f \in C(K).
\]
It is immediate to check that $P$ is a positive projection
satisfying $P\one_K=\one_K$.
Suppose $0\le M_h\le P$, for some $h \in C(K)_+$. Fix $t \in K$
and $\varepsilon >0$. Let $U$ be an open neighborhood of $t$ for which
$\mu (U)\le \varepsilon $. Let $f \in C(K)$ be a continuous
function satisfying $f(t)=1$, $0\le f\le
\one_K$, and $\supp f \subseteq U$. Then
\[
h(t)=M_h(f)(t)\le (Pf)(t)\le \mu (U)<\varepsilon.
\]
Since $\varepsilon >0$ and $t \in K$ were arbitrary, $h=0$.
\end{example}

\section{Wickstead's conjecture and other consequences}

\Cref{thm:main} can now be used to answer \cite[Question
7.3]{wickstead2017_two}.

\begin{cor}<\gh[main]{https://github.com/davidmunozlahoz/Wickstead/blob/b614cd74d9850d812e5a98b715c36b74ec0989dc/Wickstead/CorollaryThreeOne.lean\#L1149}>\label{cor:wickstead_question}
Let $X$ be a non-zero Dedekind complete vector lattice. Let $P\colon X\to
X$ be a positive projection with constant diagonal
$\alpha $. Then either $\alpha =0$ or $\alpha =1/n$, for some $n
\in \N$.
\end{cor}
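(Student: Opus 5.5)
Assume $\alpha>0$; otherwise there is nothing to prove. The plan is to find a principal ideal of $X$ that is invariant under $P$ and is lattice isomorphic to some $C(K)$ with $P$ fixing the unit, and then apply \cref{thm:main}.

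\textbf{Step 1: choosing the ideal.} Since $X\neq\{0\}$, pick $x>0$ in $X$ and set $e=Px$. From $\alpha\id_X\le P$ we get $e\ge\alpha x>0$, and $Pe=P^2x=e$. Let $I_e$ be the order ideal generated by $e$. Since $P(\{e\})=\{e\}\subseteq I_e$, \cref{lem:inv_ideal} (whose proof is purely algebraic) gives $P(I_e)\subseteq I_e$. Equip $I_e$ with the order-unit norm $\|y\|_e=\inf\{\lambda\ge0: |y|\le\lambda e\}$. Dedekind completeness of $X$ makes $I_e$ uniformly complete. By Kakutani's representation theorem for AM-spaces with unit, there is a lattice isometry $I_e\cong C(K)$ with $e\mapsto\one_K$, and $K\ne\O$ because $e\neq0$. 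Under this identification, $Q:=P|_{I_e}$ is a positive projection on $C(K)$ with $Q\one_K=\one_K$. Restricting $\alpha\id_X\le P$ gives $\alpha\id_{C(K)}\le Q$, which is condition (i) of \cref{thm:main}.

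\textbf{Step 2: condition (ii), the main obstacle.} Suppose $0\le M_f\le Q$ on $C(K)$. Then $T:=M_f$ is a central operator on $I_e$ with $0\le T\le\|f\|_\infty\id_{I_e}$ and $T\le P|_{I_e}$. I want to extend $T$ to a central operator $\tilde T$ on $X$ with $\tilde T\le P$. Let $B_e$ be the band generated by $e$ and $R$ the band projection onto it, which exists because $X$ is Dedekind complete. For $y\in X_+$ define
\[
\tilde T y=\sup_n T(Ry\wedge ne),
\]
and extend by linearity to all of $X$. The supremum exists because the sequence is bounded by $\|f\|_\infty Ry$. I will then check three properties, using that central operators on $I_e$ are order continuous and that $Ry=\sup_n(Ry\wedge ne)$:
\begin{itemize}
\item[(a)] $\tilde T$ is additive on $X_+$;
\item[(b)] $0\le\tilde T\le\|f\|_\infty\id_X$;
\item[(c)] $\tilde T\le P$.
\end{itemize}
Property (c) follows from
\[
T(Ry\wedge ne)\le P(Ry\wedge ne)\le P(Ry)\le Py .
\]
Additivity in (a) is the technical point here; it should follow from the standard extension of order-bounded disjointness-preserving maps from an order-dense ideal of $B_e$ (see \cite[Chapter 2]{aliprantis_burkinshaw2006}).

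\textbf{Step 3: concluding.} By (b) and (c), $\tilde T$ is a central operator below $P$. Since $\alpha\id_X=\mathcal D(P)$ is the supremum of the central operators below $P$, we get $\tilde T\le\alpha\id_X$. Evaluating at $\one_K=e$ gives $f=Te\le\alpha e$, that is, $\|f\|_\infty\le\alpha$, which is condition (ii). Now \cref{thm:main} applies to $Q$ and yields $\alpha=1/n$ for some $n\in\N$.
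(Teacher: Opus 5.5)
Your proof is correct, and it reaches condition (ii) by a different route from the paper's. Step~1 and the final appeal to \cref{thm:main} are the same as in the paper.

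\textbf{The paper's route (downwards).} The paper writes $P=\alpha\id_X+T$ with $T\wedge\id_X=0$. For $x\in(I_{Pe})_+$, the Riesz--Kantorovich infimum $\inf\{Ty+(x-y): 0\le y\le x\}$ ranges over the same set whether it is computed in $X$ or in the ideal $I_{Pe}$. Hence $T|_{I_{Pe}}$ is disjoint from $\id_{I_{Pe}}$ in $\Lr{I_{Pe}}$, so $P|_{I_{Pe}}$ itself has constant diagonal $\alpha$, and every central $M_f\le P|_{I_{Pe}}$ lies below $\alpha\id$.

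\textbf{Your route (upwards).} You extend $M_f$ to a central operator $\tilde T\le P$ on all of $X$ and use $\mathcal{D}(P)=\alpha\id_X$ in $\Lr{X}$ directly. This means you never compute lattice operations in $\Lr{I_e}$; you only need that $\mathcal{D}$ is a positive band projection, at the cost of building the extension. The paper's route instead yields the reusable fact that constant diagonal is inherited by $P$-invariant ideals, which is the same mechanism as in \cref{lem:disjointness_transfer}.

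\textbf{The step you flagged as technical.} Additivity of $\tilde T$ is elementary and needs neither disjointness preservation nor order continuity. For $a,b\ge 0$ one has
$(a+b)\wedge ne\le a\wedge ne+b\wedge ne\le (a+b)\wedge 2ne$.
Take $a=Ry$ and $b=Rz$, apply the positive operator $T$, and take suprema of the resulting increasing sequences. This gives $\tilde T(y+z)=\tilde Ty+\tilde Tz$. Positive homogeneity is checked the same way, and Kantorovich's extension lemma then produces a positive linear operator on $X$.

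Equivalently, $\tilde Ty=\sup\{Tu: u\in(I_e)_+,\ u\le y\}$, and additivity follows from the Riesz decomposition property. In this form the band projection $R$ is not even needed.
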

\begin{proof}
If $\alpha =0$ we are done, so assume $\alpha >0$. In particular,
$P\neq 0$, so there exists some $e \in X_+$ such that $Pe\neq 0$.
Then $I_{Pe}$, the ideal generated by $Pe$, is invariant under $P$
by \cref{lem:inv_ideal}.
Use Kakutani's theorem \cite[Theorem 1]{kakutani1941} to identify $(I_{Pe},\|{\cdot }\|_{Pe})$
lattice isometrically with $C(K)$,
for some non-empty compact Hausdorff space $K$, in such a way that
$Pe$ is identified with the function $\one_K$. We are going to
show that the restriction of $P$ to $I_{Pe}$ satisfies the
conditions of \cref{thm:main}.

It is clear that the restriction is a positive projection
satisfying $\alpha \id_{C(K)}\le P$. Moreover,
$P\one_K=P(Pe)=Pe=\one_K$. Since $X$ is Dedekind complete, and the
diagonal of $P$ is $\alpha \id_{X}$, we can express $P=\alpha
\id_X+T$, where $T$ is a positive operator disjoint from $\id_X$. By
the Riesz--Kantorovich formulas \cite[Theorem
1.18]{aliprantis_burkinshaw2006}, this means that
\[
    0=(T\wedge \id_X)(x)=\inf_{0\le y\le x} Ty+(x-y)
\]
holds for every $x \in X_+$. Note that $T=P-\alpha \id_X$ also
leaves $I_{Pe}$ invariant. Moreover, since $I_{Pe}$ is itself a
Dedekind complete Banach lattice, the Riesz--Kantorovich formulas
also hold in $\Lr{I_{Pe}}$. Therefore, for every $x \in
(I_{Pe})_+$,
\[
    (T|_{I_{Pe}}\wedge \id_X)(x)=\inf_{y \in I_{Pe}, 0\le y\le x}
    Ty+(x-y)=\inf_{0\le y\le x} Ty+(x-y)=0,
\]
where in the second equality we are using that $I_{Pe}$ is an
ideal.
Hence, in $\Lr{I_{Pe}}$, $P|_{I_{Pe}}=\alpha \id_{I_{Pe}}+T|_{I_{Pe}}$, with $T$
disjoint from the identity. Now, if $f \in C(K)_+$ is such that
$M_f\le P$, then because of the previous decomposition and the
fact that $M_f$ is a central operator $M_f\le
\alpha \id_{C(K)}$. Hence $\|f\|_\infty \le \alpha $.
\end{proof}

Previous result applies, in particular, to conditional
expectations with constant diagonal. We can use this fact, together
with the ideas in the proof of \cref{thm:main}, to
characterize all contractive positive projections with constant
diagonal on $\el p$ spaces. Given a set $A$, $\chi _B$ will denote
the characteristic function of $B\subseteq A$, and $e_a$ will denote
the characteristic function of the singleton $\{a\}\subseteq A$.

\begin{prop}\label{prop:el1}
Let $A $ be a non-empty set, let $1\le p<\infty $, and let $P\colon \el p(A)\to \el
p(A)$ be a contractive positive projection with constant diagonal
$\alpha >0$. Then $n=1/\alpha \in \N$, and there exists a partition
$\{A_i\}_i$ of $A $ into sets of size $n$ such that
\[
P\bigg(\sum_{a \in A}^{}\lambda _a e_a\bigg) = \alpha \sum_{i}^{} \bigg(\sum_{a \in A_i}^{} \lambda _a\bigg) \chi
_{A_i},
\]
whenever $\sum_{a \in A}^{}|\lambda _a |^{p}<\infty $.
In particular, if $A$ is countable, then there exists an enumeration of
$A$ such that the matrix associated with $P$ is
\[
    \begin{pmatrix}
        \alpha \bm{1}_n& \bm{0}_n&\bm{0}_n&\cdots\\
        \bm{0}_n& \alpha \bm{1}_n&\bm{0}_n&\cdots\\
        \bm{0}_n&\bm{0}_n&\alpha \bm{1}_n&\cdots\\
        \vdots &\vdots & \vdots&\ddots
    \end{pmatrix},
\]
where $\bm{0}_n$ denotes an $n$ by $n$ block of zeros, and
$\bm{1}_n$ denotes an $n$ by $n$ block of ones.
\end{prop}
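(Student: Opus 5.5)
The plan is to avoid \cref{thm:main} and instead exploit the atomic structure of $\el p(A)$ directly, together with the range structure of strictly positive projections. First I would note that on $\el p(A)$ the diagonal projection $\mathcal{D}$ simply extracts the diagonal entries: $\mathcal{D}(T)$ is multiplication by $a\mapsto \left<e_a^{*},Te_a\right>$. So "constant diagonal $\alpha$" means exactly $\left<e_a^{*},Pe_a\right>=\alpha$ for every $a\in A$, and in particular $P\ge \alpha\id$. Since $\alpha>0$, $P$ is strictly positive. By \cite[Lemma 5.43]{abramovich_aliprantis2002} (as in \cref{lem:rank_one}), $P(\el p(A))$ is then a closed vector sublattice of $\el p(A)$.

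Next I would describe this range. Every element of a sublattice of $\el p(A)$ decomposes along its level sets, and the sublattice is closed. So I expect the range to be the closed span of a family $\{u_i\}_i$ of pairwise disjoint positive vectors with $\|u_i\|_p=1$; call their supports $A_i$. Concretely, the $u_i$ are the atoms of the range, obtained from the minimal supports of elements of the range. These supports cover $A$, because $Pe_a\ge\alpha e_a$ puts $a$ in the support of $Pe_a$.

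Writing $P=\sum_i u_i\otimes\varphi_i$, the functionals $\varphi_i\ge 0$ satisfy $\varphi_i(u_j)=\delta_{ij}$. Contractivity gives $\|\varphi_i\|_{q}\le 1$, with $q$ the conjugate exponent. Combined with $\varphi_i(u_i)=1=\|u_i\|_p$, this pins down $\varphi_i$ on $A_i$:
\begin{itemize}
\item for $p>1$, strict convexity / the equality case of Hölder forces $\varphi_i=u_i^{p-1}$;
\item for $p=1$, it forces $\varphi_i=1$ on $A_i$.
\end{itemize}

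I then need to rule out mass of $\varphi_i$ off $A_i$. For $a\in A_j$ with $j\neq i$, contractivity gives
\[
\|Pe_a\|_p^p=\sum_k \varphi_k(a)^p\le 1 .
\]
The equality case already shows the whole budget is spent on the $k=j$ term, so $\varphi_i(a)=0$. For $p=1$ this is immediate, since $\varphi_j(a)=1$. For $p>1$ I would argue with the norm of $\varphi_j$: since $\sum_{a\in A_j}\varphi_j(a)^q=1$ already, $\varphi_j$ vanishes off $A_j$. Then I compare $\|P(\sum_{a\in A_j}u_j(a)^{p-1}e_a\cdots)\|$, or more simply apply contractivity to $u_j+te_a$ for small $t$ to get $\varphi_i(a)=0$. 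This support step is the one I expect to be the main obstacle. The cleanest route is probably to first show $P^{*}$ is also contractive on $\el q$ with $\varphi_i$ in the range of $P^{*}$, and then apply the same disjointness reasoning dually.

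Finally, the diagonal condition reads $u_i(a)\varphi_i(a)=\alpha$ for every $a\in A_i$. Summing over $A_i$ gives $1=\varphi_i(u_i)=\alpha|A_i|$, so $A_i$ is finite with $|A_i|=1/\alpha=n\in\N$; this also recovers \cref{cor:wickstead_question} in this setting. For $p>1$ the condition reads $u_i(a)^p=\alpha$, and for $p=1$ it reads $u_i(a)=\alpha$. In either case $u_i$ is constant on $A_i$ and $\varphi_i$ is a constant multiple of $\chi_{A_i}$, with constants fixed by $\varphi_i(u_i)=1$. This yields
\[
Px=\alpha\sum_i\Big(\sum_{a\in A_i}x_a\Big)\chi_{A_i},
\]
and the matrix form follows by enumerating $A$ block by block when $A$ is countable.
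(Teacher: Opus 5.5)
Your overall route is sound and differs from the paper's, but the step that carries the weight is not justified by what you wrote. That step is the claim that $Y=P(\el p(A))$ is the closed span of pairwise disjoint positive $u_i$. ``Every element of a sublattice decomposes along its level sets'' is false. For $a\neq b$, the space $Y=\barespn\{e_a+\tfrac12 e_b\}$ is a closed sublattice of $\el p(A)$, yet $e_a$, which is the part of $e_a+\tfrac12 e_b$ on its level set $\{1\}$, is not in $Y$. Lattice operations cannot split the level sets of a single element. What they can split are points where two elements have different ratios.

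A repair along these lines works:
\begin{enumerate}
\item Take $y\in Y_+$ with $y(a)>0$. Let $C$ be the set of $b\in\supp y$ such that $z(b)y(a)=z(a)y(b)$ for all $z\in Y$. By definition, every element of $Y$ is a multiple of $y$ on $C$.
\item For each of the countably many $b\in\supp y\setminus C$, pick $z\in Y$ and $c\in\R$ with $z(b)/y(b)<c<z(a)/y(a)$, replacing $z$ by $-z$ if necessary. Then $w_b=(z-cy)^+\in Y_+$ satisfies $w_b(b)=0<w_b(a)$, and you may rescale so that $w_b(a)\ge y(a)$.
\item The elements $y\wedge w_{b_1}\wedge\cdots\wedge w_{b_n}$ lie in $Y$ and agree with $y$ on $C$. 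They vanish at $b_1,\dots,b_n$ and are dominated by $y$, so they converge in norm to $y\chi_C$. Since $Y$ is closed, $y\chi_C\in Y$.
\item Each $C$ is an equivalence class of the relation ``for some $c>0$, $z(b)=c\,z(a)$ for all $z\in Y$''. So $C$ does not depend on $y$, and distinct classes are disjoint.
\item The normalized $y\chi_C$ are your $u_i$. Since $z=\sum_i z\chi_{A_i}$ with each $z\chi_{A_i}\in\R u_i$, you get $Y=\overline{\barespn}\{u_i\}$.
\end{enumerate}
In your route, this argument takes the place of the paper's use of \cref{lem:inv_bandcomp} and \cref{lem:rank_one}.

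The step you flagged as the main obstacle is not one. For $p>1$, the chain $1=\varphi_i(u_i)\le\|\varphi_i\|_q\|u_i\|_p\le 1$ is an equality case of H\"older on all of $A$. So $\varphi_i^q$ is proportional to $u_i^p$ everywhere, and $\varphi_i=u_i^{p-1}$ already vanishes off $A_i$. Your remark that $\sum_{a\in A_i}\varphi_i(a)^q=1\ge\|\varphi_i\|_q^q$ says the same thing, and it is all you need. The sentence about the ``whole budget'' being spent on the $k=j$ term is false for $p>1$ and should be dropped. For $p=1$, your argument via $\|Pe_a\|_1\le 1$ and the covering is correct.

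Compared with the paper: it localizes to the band spanned by $\{e_b:b\in J_a\}$, shows $P$ has rank one there, reweights to $L_p(\mu)$, and invokes the And\^o--Douglas theorem. Once the atomic structure is proved, you replace And\^o--Douglas with an elementary duality computation. Moreover, $\alpha|A_i|=\sum_{a\in A_i}u_i(a)\varphi_i(a)=\varphi_i(u_i)=1$ uses neither contractivity nor the support step. So your route gives $1/\alpha\in\N$ and $|A_i|=1/\alpha$ for every positive projection with constant diagonal $\alpha>0$ on $\el p(A)$.
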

\begin{proof}
For every $a \in A$, we can express
\[
Pe_a=\sum_{b \in J_a}^{} \lambda (a,b) e_b,
\]
where $\lambda (a,b)>0$ and $J_a \subseteq A$ is at most
countable. Moreover, that the diagonal of $P$ is $\alpha $ implies
$(Pe_a)(a)=\alpha $, that is, $a
\in J_a$ and $\lambda (a,a)=\alpha $.

Fix $a \in A$. We claim that, for every $b \in J_a$, $J_b
\subseteq J_a$. Indeed, since $P$ is a projection,
\[
\sum_{b \in J_a}^{}\lambda (a,b)e_b=Pe_a=P(Pe_a)=\sum_{b \in
J_a}^{}\lambda (a,b)\sum_{c \in J_b}^{}\lambda (b,c)e_c.
\]
Since all the coefficients are positive,
it cannot be the case that $J_b$, for $b \in J_a$, introduces a
new coordinate. Hence $J_b \subseteq J_a$.

Let $I$ be the closed linear span of $\{\, e_b : b \in J_a \, \}$. Notice
that $I$ is a closed ideal, and therefore a band. By the above
observation and \cref{lem:inv_ideal}, this ideal is invariant
under $P$. We are going to show that the restriction $P|_I\colon
I\to I$ has rank $1$ just like in
\cref{lem:invariant_ideals_trivial}. By \cref{lem:rank_one}, it suffices to show
that the only invariant closed ideals are the trivial ones.
Let $I'\subseteq I$
be a closed ideal invariant under $P|_I$. Since $I$ is
order continuous and has a quasi-interior point (in fact, it is
lattice isometric to $\el p$), it follows from
\cref{lem:inv_bandcomp} that the
disjoint complement of $I'$ in $I$ is also invariant. But $e_a$ is
an atom: it must belong to either $I'$ or its disjoint complement.
Suppose, without loss of generality, that $e_a \in I'$. Then by
invariance $Pe_a \in I'$ and, being $I'$ an ideal, $e_b \in J_a$
for all $b \in J_a$. Since $I$ is the smallest closed ideal
containing all the $e_b \in J_a$, it follows that $I'=I$.
Hence $P|_I$ has rank one.

Let $x$ be a positive norm-one element in $P(I)$. Notice that, in
particular, $x(b)>0$ for all $b \in J_a$. Define a probability measure $\mu
$ on $J_a$ by $\mu (b)=x(b)^{p}$, for $b \in J_a$. Then $I$ is lattice
isometric to $L_p(\mu )$ through the map
\[
\begin{array}{cccc}
J\colon& I & \longrightarrow & L_p(\mu ) \\
        & y & \longmapsto & (y(b)x(b)^{-1})_{b \in J_a} \\
\end{array}.
\]
Define $\tilde P=JP|_IJ^{-1}\colon L_p(\mu )\to L_p(\mu )$. This is
still a contractive positive projection, and it satisfies
\[
\tilde P(\one_{J_a})=JPJ^{-1}(\one_{J_a})=JPx=Jx=\one_{J_a}.
\]
By the Andô--Douglas theorem \cite[Corollary
5.53]{abramovich_aliprantis2002}, $\tilde P$ is the conditional expectation with respect to
some $\sigma $-algebra $\Sigma $ on $J_a$. Since $J_a$ is at most
countable, $\Sigma $ must be generated by a partition $\{A_i\}$ of
$J_a$. Hence the conditional expectation has the form
\[
\tilde P(f)=\sum_{i}^{} \chi _{A_i} \frac{1}{\mu (A_i)}\sum_{b \in
A_i}^{}\mu (b)f(b)\quad\text{for all }f \in L_p(\mu ).
\]
From $\tilde P(L_p(\mu ))=\barespn\{\one_{J_a}\}$ it
follows that the partition must be trivial, so that actually
\[
\tilde P(f)=\one_{J_a} \sum_{b \in J_a}^{} \mu
(b)f(b)\quad\text{for all }f \in L_p(\mu ),
\]
and therefore
\[
\begin{aligned}
P|_I(y)
  &=J^{-1}\tilde P Jy\\
  &=J^{-1}(\one_{J_a})\sum_{b \in J_a}^{}\mu(b)x(b)^{-1}y(b)\\
  &=x \sum_{b \in J_a}^{}x(b)^{p-1}y(b)
    \quad\text{for all }y \in I.
\end{aligned}
\]
In particular, $\alpha =(Pe_b)(b)=x(b)^{p}$ for all $b \in J_a$. Since
$\alpha >0$, and $\|x\|_p=1$, this means that $J_a$ must be finite
with $|J_a|=1/\alpha $. Also, the expression for $P|_I$ ends up
being
\begin{equation}\label{eq:Pel1}
P|_I(y)=\chi _{J_a}\alpha \sum_{b \in J_a}^{}y(b)\quad\text{for all }y \in I.
\end{equation}
Therefore, for every $b \in J_a$, $Pe_b=\alpha \chi
_{J_a}=Pe_a$. Hence $J_a=J_b$ whenever $b \in J_a$, and $J_a\cap
J_b=\O$ otherwise.

Consider then the partition $\{A_i\}$ formed by the sets of the
form $J_a$, $a \in A$, that are different. Using
\eqref{eq:Pel1} we arrive at
\[
P\bigg( \sum_{a \in A}^{}\lambda _a a \bigg) =P \bigg(
\sum_{i}^{}\sum_{a \in A_i}^{}\lambda _a a \bigg) =\alpha
\sum_{i}^{} \bigg( \sum_{a \in A_i}^{}\lambda _a \bigg) \chi
_{A_i}
\]
whenever $\sum_{a \in A}^{}|\lambda _a|^{p}<\infty $. In
the case that $A$ is at most countable, to get the desired matrix
first enumerate the partition $\{A_{k}\}_{k \in \N}$, and then
enumerate $A$ in such a way that $A_k=\{n(k-1)+1,\ldots ,nk\}$,
where $n=1/\alpha $.
\end{proof}

\begin{rem}
Following the ideas of previous proof, it is not difficult to check
that the only contractive positive projection $P\colon \el p(A)\to
\el p(A) $ with constant diagonal $0$ is $P=0$.
\end{rem}

\section{On the representation of Banach lattice algebras}

In this section we use \cref{cor:wickstead_question} to settle the
representation problem for Banach lattice algebras in the negative.
We begin with two preparatory lemmas: the first records that the
range of a positive projection is itself a Dedekind complete vector
lattice, and the second shows that disjointness from a positive
idempotent in $\Lr X$ becomes disjointness from the identity
on its range. These lemmas will let us apply
\cref{cor:wickstead_question} on the range of an arbitrary positive
idempotent of $\Lr X$.

In the following lemmas, when $X$ is a vector lattice, $Y$ is a
sublattice, and $S\subseteq Y$ is a set, we denote by $\sup_X S$ the
supremum of $S$ in $X$ (whenever it exists) and by $\sup_Y S$ the
supremum of $S$ in $Y$ (whenever it exists).

\begin{lem}\label{lem:range_lattice}
Let $X$ be a Dedekind complete vector lattice, and let $E\colon
X\to X$ be a positive projection. Then $Y=E(X)$, endowed with the
order inherited from $X$, is a Dedekind complete vector lattice
with supremum $\vee _Y$ given by
\[
y_1\vee_Y y_2=E(y_1 \vee y_2)\quad\text{for all }y_1,y_2 \in Y.
\]
\end{lem}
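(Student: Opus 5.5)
The plan is to show directly that $E(y_1\vee y_2)$ is the least upper bound of $y_1,y_2$ in $Y$. Dedekind completeness will then follow by the same device applied to bounded sets. Throughout, $Y=E(X)$ is a linear subspace, ordered by the cone $Y\cap X_+$. This cone is proper, since $X_+$ is, and $Y$ is an ordered vector space. So the only real work is existence of suprema.

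\textbf{Binary suprema.} Let $y_1,y_2\in Y$ and set $z=E(y_1\vee y_2)\in Y$.
\begin{enumerate}
\item Since $y_i\le y_1\vee y_2$ and $E$ is positive, $y_i=Ey_i\le E(y_1\vee y_2)=z$. Hence $z$ is an upper bound in $Y$.
\item If $w\in Y$ satisfies $w\ge y_1,y_2$, then $w\ge y_1\vee y_2$ in $X$. Applying $E$ gives $w=Ew\ge z$.
\end{enumerate}
So $z=\sup_Y\{y_1,y_2\}$, and $Y$ is a lattice. Infima follow from $y_1\wedge_Y y_2=-((-y_1)\vee_Y(-y_2))$, i.e.\ $E(y_1\wedge y_2)$. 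Compatibility of the order with the linear structure is inherited from $X$. Hence $Y$ is a vector lattice with the stated formula for $\vee_Y$.

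\textbf{Dedekind completeness.} Let $\O\neq S\subseteq Y$ be bounded above in $Y$ by some $w$. Then $S$ is bounded above in $X$, so $s_0=\sup_X S$ exists by Dedekind completeness of $X$. I claim $E s_0=\sup_Y S$, by the same argument as before:
\begin{enumerate}
\item For $s\in S$ we have $s\le s_0$, so $s=Es\le Es_0$.
\item If $w\in Y$ is an upper bound of $S$, then $w\ge s_0$, so $w=Ew\ge Es_0$.
\end{enumerate}
Thus every nonempty subset of $Y$ that is bounded above has a supremum in $Y$.

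There is no genuine obstacle. The only point requiring care is to keep straight that upper bounds in $Y$ are in particular upper bounds in $X$, so they dominate $\sup_X$. Also, the order on $Y$ is the restriction of the order on $X$, so $E$ being idempotent gives $Ew=w$ for $w\in Y$. Note that $Y$ need not be a sublattice of $X$, which is why the formula involves $E$.
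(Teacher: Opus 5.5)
Your proof is correct and follows essentially the same route as the paper. You identify $E(y_1\vee y_2)$ as the binary supremum by the same two-step argument, and you get Dedekind completeness by showing $E(\sup_X S)=\sup_Y S$. The paper does the latter for increasing bounded nets while you do it for arbitrary nonempty bounded-above subsets, which is the same argument and even spares you the equivalence of the two formulations of Dedekind completeness.
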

\begin{proof}
Let $y_1,y_2 \in Y$. Since $E$ is positive and $Ey_i=y_i$ for
$i=1,2$, we have $E(y_1\vee y_2)\ge Ey_i=y_i$, so $E(y_1\vee
y_2)$ is an upper bound of $\{y_1,y_2\}$ in $Y$. If $z \in Y$
satisfies $z\ge y_1$ and $z\ge y_2$, then $z\ge y_1\vee y_2$ in
$X$, whence $z=Ez\ge E(y_1\vee_X y_2)$. This shows that
$y_1\vee_Y y_2=E(y_1\vee y_2)$.

For Dedekind completeness, let $(y_\gamma )$ be an increasing net
in $Y$ that is bounded above by some $y_0 \in Y$. By Dedekind
completeness of $X$, $z=\sup_X y_\gamma $ exists. Then $Ez\ge
Ey_\gamma =y_\gamma $ for all $\gamma $, so $Ez$ is an upper bound
of $(y_\gamma )$ in $Y$. If $w \in Y$ also satisfies $w\ge
y_\gamma $ for all $\gamma $, then $w\ge z$ in $X$, and so $w=Ew\ge
Ez$. Hence $Ez=\sup_Y y_\gamma $.
\end{proof}

\begin{lem}\label{lem:disjointness_transfer}
Let $X$ be a Dedekind complete vector lattice, and let $E,T \in
\Lr X$ be positive operators satisfying:
\begin{enumerate}
    \item $E^2=E$,
    \item $ET=TE=T$, and
    \item $E\wedge T=0$ in $\Lr X$.
\end{enumerate}
Set $Y=E(X)$, endowed with the order inherited from $X$. Then
\[
\id_Y\wedge T|_Y=0\quad\text{in }\Lr Y.
\]
\end{lem}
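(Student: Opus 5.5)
The plan is to verify $\id_Y\wedge T|_Y=0$ pointwise on $Y_+$ using the Riesz--Kantorovich formula in $\Lr Y$. The key observation is that every element of the corresponding Riesz--Kantorovich set in $X$ for $E\wedge T$ also appears in the set for $Y$.

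\emph{Setup.} By \cref{lem:range_lattice}, $Y$ is a Dedekind complete vector lattice. Since $ET=T$, the operator $T$ maps $X$ into $Y$, so $T|_Y\colon Y\to Y$ is a positive, hence regular, operator on $Y$. Thus $\id_Y\wedge T|_Y$ exists in $\Lr Y$. By the Riesz--Kantorovich formulas \cite[Theorem 1.18]{aliprantis_burkinshaw2006} applied in $Y$, for every $y\in Y_+$,
\[
(\id_Y\wedge T|_Y)(y)=\inf\nolimits_Y W_y,\qquad W_y=\{\, z+T(y-z) : z\in Y,\ 0\le z\le y\,\}.
\]
Since the order on $Y$ is the one inherited from $X$, the condition $0\le z\le y$ means the same thing in $X$ and in $Y$. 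Likewise, the hypothesis $E\wedge T=0$ in $\Lr X$ gives, for every $x\in X_+$,
\[
0=\inf\nolimits_X V_x,\qquad V_x=\{\, Eu+T(x-u) : u\in X,\ 0\le u\le x\,\}.
\]

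\emph{Key step: $V_y\subseteq W_y$ for $y\in Y_+$.} Take $u\in X$ with $0\le u\le y$ and put $z=Eu\in Y$. By positivity of $E$ and $Ey=y$, we get $0\le z\le y$. Using $TE=T$,
\[
z+T(y-z)=Eu+Ty-TEu=Eu+Ty-Tu=Eu+T(y-u).
\]
Hence every element of $V_y$ lies in $W_y$.

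\emph{Conclusion.} Every element of $W_y$ is positive, because $z\ge 0$ and $T(y-z)\ge 0$. So $0\in Y$ is a lower bound of $W_y$ in $Y$. Conversely, let $w\in Y$ be any lower bound of $W_y$. Then $w$ is a lower bound of $V_y\subseteq W_y$ in $X$, so $w\le\inf_X V_y=0$. Therefore $0$ is the greatest lower bound of $W_y$ in $Y$, that is, $(\id_Y\wedge T|_Y)(y)=0$ for every $y\in Y_+$. Since a positive operator vanishing on $Y_+$ is zero, $\id_Y\wedge T|_Y=0$.

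\emph{Main obstacle.} The only delicate point is that infima in $Y$ need not coincide with infima in $X$; for instance, $\vee_Y$ involves $E$ by \cref{lem:range_lattice}. This is why I avoid computing $\inf_Y$ directly. Instead, I use only that $0$ lies in $Y$ and is a lower bound, and compare it with arbitrary lower bounds via the inclusion $V_y\subseteq W_y$. That inclusion is where both identities $TE=T$ (for the algebraic rewriting) and $ET=T$ (for $T|_Y$ to be an operator on $Y$) are used.
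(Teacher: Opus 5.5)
Your proof is correct and is essentially the paper's argument. Both compare the Riesz--Kantorovich sets for $E\wedge T$ in $\Lr X$ and for $\id_Y\wedge T|_Y$ in $\Lr Y$ via the substitution $z=Eu$ and the identity $TE=T$, and both then use that a lower bound in $Y$ is a lower bound in $X$. The differences are only cosmetic: you use just the inclusion $V_y\subseteq W_y$ (the paper observes the two sets coincide), and you state explicitly the positivity of $W_y$, which the paper uses tacitly to pass from $(\id_Y\wedge T|_Y)(y)\le 0$ to equality.
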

\begin{proof}
By \cref{lem:range_lattice}, $Y$ is a Dedekind complete vector
lattice, so $\Lr Y$ is itself a Dedekind complete vector lattice
and the Riesz--Kantorovich formulas hold in it. Note also that
$T=ET=TE$ implies that $T$ leaves $Y$ invariant, so the restriction
$T|_Y\colon Y\to Y$ is a well-defined element of $\Lr Y$.

Fix $y \in Y_+$. We compute $(E\wedge T)(y)$ using the
Riesz--Kantorovich formula in $\Lr X$:
\[
(E\wedge T)(y)=\inf_X \{\, Ea+T(y-a) : 0\le a\le y \, \}.
\]
Since $T=TE$, for any $a \in X_+$ with $a\le y$,
\[
Ea+T(y-a)=Ea+Ty-TEa=(\id_Y-T|_Y)(Ea)+T|_Y(y).
\]
As $a$ ranges over $\{a \in X_+ : a\le y\}$, the element $w=Ea$
ranges over the order interval $[0,y]_Y=\{w \in Y_+ : w\le y\}$ in
$Y$. Therefore
\begin{equation}\label{eq:disjointness_X}
(E\wedge T)(y)=\inf_X \{\, (\id_Y-T|_Y)(w)+T|_Y(y) : w \in [0,y]_Y \, \}.
\end{equation}
On the other hand, the Riesz--Kantorovich formula in $\Lr Y$ gives
\begin{equation}\label{eq:RK_Y}
(\id_Y\wedge T|_Y)(y)=\inf_Y\big\{\, (\id_Y-T|_Y)(w)+T|_Y(y) :
w \in [0,y]_Y\,\big\}.
\end{equation}
The set whose infimum is taken is the same subset of $Y$ in both
\eqref{eq:disjointness_X} and \eqref{eq:RK_Y}. The only difference is
that the first infimum is taken in $X$ while the second is taken in
$Y\subseteq X$. Therefore
\[
    (\id_Y \wedge T|_Y)(y)\le (E\wedge T)(y)=0.
\]
Since this holds for every $y \in Y_+$, we conclude that $\id_Y\wedge
T|_Y=0$ in $\Lr Y$.
\end{proof}

We can now state the criterion for non-representability. The
proposition below generalizes the criterion implicit in
\cite[Section 7]{wickstead2017_two} in two ways: the algebra $A$
need not have an algebraic identity, and the conclusion does not
require the representation to be unital.

\begin{prop}<\gh[non_representable]{https://github.com/davidmunozlahoz/Wickstead/blob/main/Wickstead/PropositionFourThree.lean\#L1044}>\label{prop:poison}
Let $A$ be a lattice-ordered algebra. Suppose there exist non-zero
positive elements $e,p \in A_+$ satisfying:
\begin{enumerate}
    \item $e^2=e$, $p^2=p$,
    \item $ep=pe=p$, and
    \item $p=\alpha e+x$, for some $x \in A$ with $x\wedge e=0$, and
        some $\alpha \in \R_+$.
\end{enumerate}
If $\alpha \not\in \{0\}\cup \{1/n:n \in \N\}$, then $A$ has no
faithful representation on $\Lr X$, for $X$ any Dedekind complete vector
lattice.
\end{prop}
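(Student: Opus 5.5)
Suppose, to reach a contradiction, that $\Phi\colon A\to\Lr X$ is a faithful representation, with $X$ a Dedekind complete vector lattice. Set $E=\Phi(e)$, $P'=\Phi(p)$ and $T=\Phi(x)$. The plan is to transport the relations (i)--(iii) into $\Lr X$ through $\Phi$, restrict everything to the range $Y=E(X)$, and then apply \cref{cor:wickstead_question} to the restriction of $P'$ to $Y$. That corollary forces $\alpha\in\{0\}\cup\{1/n:n\in\N\}$, which contradicts the hypothesis.

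First, because $\Phi$ is both an algebra and a lattice homomorphism, we get:
\begin{itemize}
\item $E^2=E$ and $P'^2=P'$;
\item $EP'=P'E=P'$;
\item $P'=\alpha E+T$ with $T\wedge E=\Phi(x\wedge e)=0$.
\end{itemize}
Here $E,P'\ge 0$. Also $T\ge 0$, because $x\ge 0$: indeed $x^-\le$ something disjoint from $e$. More precisely, $x=x\wedge e+\dots$ is not immediate, so I will argue instead that $x\wedge e=0$ implies $x\ge x\wedge e=0$.

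Next I check that $ET=TE=T$. We have $T=P'-\alpha E$, and $E P'=P'$ and $EE=E$, so $ET=P'-\alpha E=T$; the same computation works on the other side. Faithfulness gives $E\neq0$, since $e\neq 0$, and hence $Y=E(X)\neq\{0\}$.

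By \cref{lem:range_lattice}, $Y$ is a non-zero Dedekind complete vector lattice. By \cref{lem:disjointness_transfer}, applied to $E$ and $T$, we get $\id_Y\wedge T|_Y=0$ in $\Lr Y$. Since $P'=EP'E$, the operator $P'$ leaves $Y$ invariant, and $Q:=P'|_Y=\alpha\id_Y+T|_Y$ is a positive projection on $Y$. Positivity in $Y$ refers to the inherited order, and $Q$ is positive because $P'$ is.

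The diagonal $\mathcal D$ is the band projection onto the band generated by $\id_Y$. It sends $\alpha\id_Y$ to itself, and it kills $T|_Y$ because $T|_Y$ is a positive operator disjoint from $\id_Y$. Hence $\mathcal D(Q)=\alpha\id_Y$, so $Q$ has constant diagonal $\alpha$. \Cref{cor:wickstead_question} then gives $\alpha\in\{0\}\cup\{1/n\}$, which is the desired contradiction.

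The subtle step is whether $Y$ really sits inside $X$ with the \emph{inherited} order and whether $Q$ is a legitimate positive projection on it. \Cref{lem:range_lattice} handles this, although the lattice operations of $Y$ differ from those of $X$; this is exactly why \cref{lem:disjointness_transfer} is needed rather than a naive restriction argument.

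One more point needs care. Nothing in the argument requires $p\neq 0$ once $e\neq 0$. In fact, if $Q=0$, then $\alpha\id_Y\le Q=0$ forces $\alpha=0$, so that case is already covered. The hypothesis $p\ne0$ therefore plays no role, and only $E\neq 0$, which comes from faithfulness, is needed.
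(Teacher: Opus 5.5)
Your proposal is correct and follows the paper's proof essentially step for step. You transport (i)--(iii) through $\Phi$, derive $ET=TE=T$, use \cref{lem:range_lattice} and \cref{lem:disjointness_transfer} to see that $Q=P'|_Y=\alpha\id_Y+T|_Y$ has constant diagonal $\alpha$ on the non-zero Dedekind complete lattice $Y=E(X)$, and conclude with \cref{cor:wickstead_question}. Your explicit justification that $T\ge 0$ (from $x\ge x\wedge e=0$), which \cref{lem:disjointness_transfer} needs but the paper leaves implicit, is a welcome addition, though you should remove the false starts that precede it.
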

\begin{proof}
Suppose that $\Phi \colon A\to \Lr X$ is
a faithful representation of $A$, where $X$ is a Dedekind complete
vector lattice. Set $E=\Phi (e)$, $P=\Phi (p)$, and $T=\Phi (x)$.
Since $\Phi $ is a lattice and algebra homomorphism, $E$ and $P$
are positive projections, and they are non-zero because $\Phi $
is faithful and $e,p\neq 0$. The relations
\[
ex=xe=x,
\]
which follow from $ep=pe=p$ together with $p=\alpha e+x$, give
\[
ET=TE=T.
\]
Applying $\Phi $ to (iii) yields $P=\alpha E+T$, and the
disjointness $e\wedge x=0$ in $A$ transfers via $\Phi $ to
$E\wedge T=0$ in $\Lr X$.

By \cref{lem:disjointness_transfer}, setting $Y=E(X)$, we obtain
$\id_Y\wedge T|_Y=0$ in $\Lr Y$. Restricting the equality $P=\alpha
E+T$ to $Y$ yields
\[
P|_Y=\alpha \id_Y+T|_Y,
\]
where, by \cref{lem:range_lattice}, $Y$ is a Dedekind complete
vector lattice (non-zero, since $E\neq 0$), $P|_Y$ is a positive
projection on it, and $T|_Y$ is disjoint from $\id_Y$. That is,
$P|_Y$ has constant diagonal $\alpha $. By
\cref{cor:wickstead_question}, $\alpha \in \{0\}\cup \{1/n:n \in
\N\}$.
\end{proof}

\begin{rem}
When $e$ is an algebraic identity and $\Phi $ is unital, \cref{lem:disjointness_transfer} is not actually needed:
$\Phi $ being unital then forces $E=\id_X$, so $Y=X$, and the
operator $P=\Phi (p)$ already has constant diagonal $\alpha $ in
$\Lr X$, allowing one to apply \cref{cor:wickstead_question}
directly. The role of \cref{lem:disjointness_transfer} is precisely
to handle the case in which $E=\Phi (e)$ is not the identity of
$\Lr X$, which is what one encounters either when the algebra has
no identity or when the representation is not required to be
unital.
\end{rem}

The following example, due to Wickstead \cite[Section
7]{wickstead2017_two}, exhibits Banach lattice algebras of
dimension $2$ satisfying the conditions of \cref{prop:poison}. In
its original form it answered \cite[Question
5.3]{wickstead2017_questions} and \cite[Problem
2]{wickstead2017_open} in the case of unital representations; with
the strengthened proposition above, the conclusion no longer
depends on the representation being unital.

\begin{example}\label{ex:final}
Let $-1\le \beta \le 0$. Consider $\R^2$ with pointwise multiplication and positive cone
\[
P_{\beta }^{1}=\{\, (x,y)\in \R^2 : 0\le x,\beta x\le y\le x \, \}.
\]
It is shown in \cite[Theorem 3.1]{wickstead2017_two} that this is a lattice cone for
which the product of positive elements is positive. Moreover,
$(\R^2,P_{\beta }^{1})$ can be endowed with a norm so as to make it
a Banach lattice algebra \cite[Proposition 8.3]{wickstead2017_two}. Also, the
algebraic identity $e=(1,1)$ is positive.

Let $p=(1,0)$ and
$x=(1,\beta )$. Note that $p^2=p$, $p\ge 0$, and that
\[
p=\frac{\beta }{\beta -1}e + \frac{1}{1-\beta}x.
\]

Let us check that $x\wedge e=0$. If $y=(y_1,y_2)$ and $x,e\ge y$,
then
\[
\beta (1-y_1)\le \beta -y_2\le 1-y_1\text{ and }\beta (1-y_1)\le
1-y_2\le 1-y_1,
\]
which give $\beta y_1\ge y_2\ge y_1$. Thus $y_1\le 0$, which
together with $\beta y_1\ge y_2\ge y_1$, implies that $y\le 0$.

By \cref{prop:poison}, applied with $e=(1,1)$ and $p=(1,0)$, if
\[
    \frac{\beta }{\beta -1}\not\in \{0\}\cup \{1/n:n \in \N\},
\]
that is, if $\beta \not\in \{0\}\cup \{-1/(n-1):n \in \N,n\ge 2\}$,
then $(\R^2,P_{\beta }^{1})$ admits no faithful representation on
any Dedekind complete vector lattice.
\end{example}

\section*{AI disclosure}

Claude Opus 4.6 provided the details of
\cref{lem:disjointness_transfer} and was also used to correct typos and
minor errors in the final version of the manuscript. All other results
and the exposition are due to the author.

\section*{Acknowledgements}

I would like to thank all the participants in the \textit{Workshop on Free
Banach Lattices} held at the University of Wuppertal in
February 2026. One of the problem sessions at the workshop was devoted
to this question. The enriching discussions provided valuable ideas
and motivated me to pursue the problem further. I am
particularly grateful to J.\ Glück for pointing out
\cref{lem:inv_bandcomp} and for organizing the workshop, and
to P.\ Tradacete for his feedback on early versions of the paper.

\emergencystretch=2em
\printbibliography

\end{document}